\documentclass[a4 paper, 12pt]{article}

\usepackage[usenames]{color}
\usepackage{bbm,amsmath,amssymb,amsfonts,amsthm,amsbsy,graphics,graphicx,xcolor}

\setlength{\hoffset}{-0.47in}\textwidth = 16cm
\setlength{\voffset}{-0.9in}\textheight = 24cm

\newtheorem{lem}{Lemma}
\newtheorem{thm}[lem]{Theorem}
\newtheorem{cor}[lem]{Corollary}

\theoremstyle{definition}
\newtheorem*{remark}{Remark}

\DeclareMathOperator{\C}{\mathbb{C}}
\DeclareMathOperator{\spa}{\mathrm{span}}
\DeclareMathOperator{\dimn}{\mathrm{dim}}
\DeclareMathOperator{\GL}{\mathrm{GL}}
\DeclareMathOperator{\Hom}{\mathrm{Hom}}
\DeclareMathOperator{\sig}{\sigma}
\DeclareMathOperator{\vp}{\varphi}
\DeclareMathOperator{\ii}{\mathrm{i}}

\newcommand{\dugi}[2]{\big[#1:#1[#2]\big]}
\newcommand{\drm}[1]{\mathrm{#1}}
\newcommand{\dbf}[1]{\mathbf{#1}}

\newcommand{\dmf}[1]{\mathfrak{#1}}
\newcommand{\dmb}[1]{\boldsymbol{#1}}
\newcommand{\dcal}[1]{\mathcal{#1}}

\title{\bf\upshape Graph Automorphisms from the Geometric Viewpoint}
\date{}

\author{ Wen-Xue Du ~~ Yi-Zheng Fan \\
{\small  \it School of Mathematical Science, Anhui University, Hefei,
230039, China}\\
{\footnotesize E-mail:  wenxuedu@gmail.com (W.-X. Du), fanyz@ahu.edu.cn (Y.-Z.
Fan)}}

\begin{document}
\maketitle

\begin{abstract} An automorphism of a graph $G=(V,E)$ is a bijective map $\phi$ from $V$ to itself such that $\phi(v_i)\phi(v_j)\in E$ $\Leftrightarrow$ $v_i v_j\in E$ for any two vertices $v_i$ and $v_j$. Denote by $\mathfrak{G}$ the group consisting of all automorphisms of $G$. Apparently, an automorphism of $G$ can be regarded as a permutation on $[n]=\{1,\ldots,n\}$, provided that $G$ has $n$ vertices. For each permutation $\sigma$ on $[n]$, there is a natural action on any given vector $\boldsymbol{u}=(u_1,\ldots,u_n)^t\in \mathbb{C}^n$ such that $\sigma\boldsymbol{u}=(u_{\sigma^{-1}1},u_{\sigma^{-1}2},\ldots,u_{\sigma^{-1} n})^t$, so $\sigma$ can be viewed as a linear operator on $\mathbb{C}^n$. Accordingly, one can formulate a characterization to the automorphisms of $G$, {\it i.e.,} $\sigma$ is an automorphism of $G$ if and only if every eigenspace of $\mathbf{A}(G)$ is $\sigma$-invariant, where $\mathbf{A}(G)$ is the adjacency matrix of $G$. Consequently, every eigenspace of $\mathbf{A}(G)$ is $\mathfrak{G}$-invariant, which is equivalent to that for any eigenvector $\boldsymbol{v}$ of $\mathbf{A}(G)$ corresponding to the eigenvalue $\lambda$, $\mathrm{span}(\mathfrak{G}\boldsymbol{v})$ is a subspace of the eigenspace $V_{\lambda}$. By virtue of the linear representation of the automorphism group $\mathfrak{G}$, we characterize those extremal vectors $\boldsymbol{v}$ in an eigenspace of $\mathbf{A}(G)$ so that $\mathrm{dim}~\mathrm{span}(\mathfrak{G}\boldsymbol{v})$ can attain extremal values, and furthermore, we determine the exact value of $\mathrm{dim}~\mathrm{span}(\mathfrak{G}\boldsymbol{v})$  for any eigenvector $\boldsymbol{v}$ of $\mathbf{A}(G)$. \end{abstract}

\noindent{\bf Keywords:} adjacency matrix; automorphism group of a graph; linear representation of a finite group


\section{Introduction}

Throughout this paper, $G$ stands for a simple graph with vertex set $V$ and edge set $E$. A bijective map $\phi:V\rightarrow V$ is called an {\em automorphism} of $G$ if $\phi(v_i)\phi(v_j)\in E$ $\Leftrightarrow$ $v_i v_j\in E$ for any two vertices $v_i$ and $v_j$. The group consisting of all automorphisms of $G$ is denoted by $\drm{Aut}~G$, which is called the automorphism group of $G$.\vspace{2mm}

The {\em adjacency matrix} of a graph $G$ on $n$ vertices is a $n\times n$ (0,1)-matrix where each entry $a_{ij}$ of the matrix is equal to 1 if and only if the two vertices $v_i$ and $v_j$ are adjacent. We denote the adjacency matrix of $G$ by $\dbf{A}(G)$, or $\dbf{A}$ for short in the case that one can easily identify the corresponding graph from the context. By virtue of the adjacency matrix of $G$, we can formulate an algebraic way of characterizing automorphisms of $G$.

A {\em permutation} of the set $[n]=\{1,2,\ldots,n\}$ is a bijective map $\sigma$ from $[n]$ to itself, and $\dmf{S}_n$ stands for the set consisting of all permutations of $[n]$, which is called the {\em symmetric group of degree $n$}.
Naturally, each permutation $\sigma\in\dmf{S}_n$ can act on a given vector $\dmb{u}=(u_1,\ldots,u_n)^t\in \C^n$, {\it i.e.,} $\sigma\dmb{u}=(u_{\sigma^{-1}1},u_{\sigma^{-1}2},\ldots,u_{\sigma^{-1} n})^t$, where $\C^n$ is the $n$-dimensional vector space over the complex field $\C$. Accordingly, any permutation $\sigma$ in $\dmf{S}_n$ can be regarded, through the action on vectors, as a linear operator on $\C^n$, which is denoted by $\dcal{T}_{\sigma}.$

We call a square matrix a {\em permutation matrix} if exactly one entry in each row and column is equal to 1, and all other entries are equal to 0. It is easy to check that the matrix $\dbf{P}_{\sigma}$ of the operator $\dcal{T}_{\sigma}$ with respect to the standard basis $\dmb{e}_1,\ldots,\dmb{e}_n$ is a permutation matrix, where each $\dmb{e}_i$ $(i=1,\ldots,n)$ has exactly one non-trivial entry on $i$th coordinate which is equal to 1, and all other entries of $\dmb{e}_i$ are equal to 0.

Let $\phi$ be a bijective map from the vertex set of a graph $G$ with $n$ vertices to itself. One can easily build in accordance with $\phi$ a permutation $\sigma_{\phi}\in\dmf{S}_n$ so that $\sigma_{\phi}(i)=j$ if and only if $\phi(v_i)=v_j$, where $v_i, v_j\in V(G)$, and vice versa. Consequently, it is legitimate not to distinguish two maps $\phi$ and $\sigma_{\phi}$, and furthermore, a moment's reflection will show that
$$\phi\mbox{ is an automorphism of }G\mbox{ if and only if }\dbf{P}_{\sigma_{\phi}}^{-1}\dbf{A}\dbf{P}_{\sigma_{\phi}}=\dbf{A},$$
which presents an algebraic way of characterizing automorphisms of $G$. Since every permutation $\sigma$ is an element of $\dmf{S}_n$, we can regard the group $\drm{Aut}~G$ as a subgroup of $\dmf{S}_n$. \vspace{0.5mm}

Evidently, the adjacency matrix $\dbf{A}(G)$ is symmetric and thus $\dbf{A}(G)$ can be viewed as the matrix of a self-adjoint operator $\dcal{T}_G$ on $\C^n$ with respect to an ordered basis $\dmb{b}_1,\ldots,\dmb{b}_n$, {\it i.e.,} $\dcal{T}_G(\dmb{b}_1,\ldots,\dmb{b}_n)=(\dmb{b}_1,\ldots,\dmb{b}_n)\dbf{A}$.  Accordingly,
$$\dcal{T}_G \dmb{b}_i=\sum_{v_j\sim v_i} \dmb{b}_j,~~~i=1,\ldots,n,$$
where the symbol $v_j\sim v_i$ indicates that the two vertices $v_j$ and $v_i$ are adjacent in $G$, and  $1\le i, j \le n$. So $\dcal{T}_G$ provides  the adjacency information about the graph $G$ and thus the standard basis $\dmb{e}_1,\ldots,\dmb{e}_n$ would be appropriate for obtaining the matrix $\dbf{A}(G)$, since in that case, one can find out in virtue of $\dcal{T}_G \dmb{e}_i$ which vertices are adjacent to the vertex $v_i$, $i=1,\ldots,n$.

We now can formulate another way of characterizing automorphisms of $G$ through the eigenspaces of $\dcal{T}_G$, which is summarized in the following result.

Let $\dcal{T}$ be a linear operator on $\C^n$. A subspace $U$ of $\C^n$ is said to be $\dcal{T}$-invariant if $\dcal{T}U\subseteq U$.

\begin{thm}\label{Lem-AutomorphismAndOperator} Let $G$ be a graph of order $n$ and let $\sigma$ be a permutation in $\dmf{S}_n$. Then the following statements are equivalent.

\begin{enumerate}

\item[1)] $\sigma$ is an automorphism of $G$.

\item[2)] Every eigenspace of $\dcal{T}_G$ is $\dcal{T}_{\sigma}$-invariant.

\item[3)] $\C^n$ has an orthonormal basis consisting of eigenvectors of both $\dcal{T}_G$ and $\dcal{T}_{\sigma}$.

\item[4)] Every eigenspace of $\dcal{T}_{\sigma}$ is $\dcal{T}_G$-invariant.

\end{enumerate}  \end{thm}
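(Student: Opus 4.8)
The plan is to route all four statements through the single condition $(\ast)$ that the operators $\mathcal{T}_G$ and $\mathcal{T}_\sigma$ commute, and to exploit two structural facts recorded at the outset. First, $\mathbf{A}(G)$ is real symmetric, so $\mathcal{T}_G$ is self-adjoint and the spectral theorem yields the orthogonal decomposition $\mathbb{C}^n=\bigoplus_{\lambda}V_\lambda$ into the eigenspaces of $\mathcal{T}_G$. Second, $\mathbf{P}_\sigma$ is a permutation matrix, hence orthogonal, so $\mathcal{T}_\sigma$ is unitary; being normal it is likewise unitarily diagonalizable, giving the orthogonal decomposition $\mathbb{C}^n=\bigoplus_{\mu}W_\mu$ into the eigenspaces of $\mathcal{T}_\sigma$. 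These two decompositions are essentially the only nontrivial inputs.

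The hub equivalence $1)\Leftrightarrow(\ast)$ is immediate from the algebraic characterization recalled in the introduction: $\sigma$ is an automorphism of $G$ iff $\mathbf{P}_\sigma^{-1}\mathbf{A}\mathbf{P}_\sigma=\mathbf{A}$ iff $\mathbf{A}\mathbf{P}_\sigma=\mathbf{P}_\sigma\mathbf{A}$ (using invertibility of $\mathbf{P}_\sigma$) iff $\mathcal{T}_G\mathcal{T}_\sigma=\mathcal{T}_\sigma\mathcal{T}_G$. Next I would prove $2)\Leftrightarrow(\ast)$: if $(\ast)$ holds then for $\boldsymbol{v}\in V_\lambda$ one has $\mathcal{T}_G(\mathcal{T}_\sigma\boldsymbol{v})=\mathcal{T}_\sigma(\mathcal{T}_G\boldsymbol{v})=\lambda\,\mathcal{T}_\sigma\boldsymbol{v}$, so $\mathcal{T}_\sigma\boldsymbol{v}\in V_\lambda$; conversely, writing an arbitrary $\boldsymbol{v}=\sum_\lambda\boldsymbol{v}_\lambda$ along $\bigoplus_\lambda V_\lambda$ and using $\mathcal{T}_\sigma\boldsymbol{v}_\lambda\in V_\lambda$, both $\mathcal{T}_G\mathcal{T}_\sigma\boldsymbol{v}$ and $\mathcal{T}_\sigma\mathcal{T}_G\boldsymbol{v}$ equal $\sum_\lambda\lambda\,\mathcal{T}_\sigma\boldsymbol{v}_\lambda$, so $(\ast)$ holds. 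The equivalence $4)\Leftrightarrow(\ast)$ is proved verbatim with the roles of $\mathcal{T}_G$ and $\mathcal{T}_\sigma$ exchanged and the decomposition $\bigoplus_\mu W_\mu$ replacing $\bigoplus_\lambda V_\lambda$; this is exactly where diagonalizability of $\mathcal{T}_\sigma$ — i.e. normality of permutation matrices — is used. Finally $3)\Rightarrow(\ast)$ is trivial, since in a common eigenbasis both operators are diagonal matrices and diagonal matrices commute.

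To close the cycle there remains $(\ast)\Rightarrow 3)$, and I expect this to be the only step requiring any care. Assuming $(\ast)$, the already-established implication $(\ast)\Rightarrow 2)$ shows each $V_\lambda$ is $\mathcal{T}_\sigma$-invariant; since $\mathcal{T}_\sigma$ is unitary it restricts to a unitary operator on the finite-dimensional inner product space $V_\lambda$ (injective hence surjective on $V_\lambda$, and inner-product preserving), which therefore possesses an orthonormal basis $\mathcal{B}_\lambda$ of eigenvectors of $\mathcal{T}_\sigma$, and every vector of $\mathcal{B}_\lambda$ is automatically an eigenvector of $\mathcal{T}_G$ as well, lying in $V_\lambda$. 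Because the $V_\lambda$ are pairwise orthogonal and sum to $\mathbb{C}^n$, the union $\bigcup_\lambda\mathcal{B}_\lambda$ is an orthonormal basis of $\mathbb{C}^n$ consisting of joint eigenvectors of $\mathcal{T}_G$ and $\mathcal{T}_\sigma$, which is statement $3)$. The delicate point, and the only place the full strength of the hypotheses is used, is this simultaneous orthonormal diagonalization: it relies not merely on $V_\lambda$ being $\mathcal{T}_\sigma$-invariant but on $\mathcal{T}_\sigma$ being normal, so that its restriction to the invariant subspace $V_\lambda$ is again unitarily diagonalizable. Everything else is bookkeeping with the spectral theorem and the matrix–operator dictionary.
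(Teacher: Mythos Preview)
Your proof is correct and uses the same ingredients as the paper's: the algebraic characterization $\mathbf{P}_\sigma\mathbf{A}=\mathbf{A}\mathbf{P}_\sigma$, normality of $\mathbf{P}_\sigma$, and the spectral theorem for normal operators. The paper arranges these as a cycle $1)\Rightarrow 2)\Rightarrow 3)\Rightarrow 4)\Rightarrow 1)$ (invoking its Lemma~\ref{Lem-InvariantSubspace} for $3)\Rightarrow 4)$) rather than routing every statement through the commutation hub $(\ast)$, but the mathematical content is the same.
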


This result not only provides a way to characterize automorphisms of $G$ through the geometric point of view, but more interestingly, presents a way of analyzing eigenvectors of $\dcal{T}_G$. To explain it clearly, we first define two terms. Let $\dmf{G}$ be a permutation group in  $\dmf{S}_n$, and let $\rho:\dmf{G}\rightarrow \GL(\C^n)$ be the {\em permutation representation} of $\dmf{G}$ in $\C^n$, which maps each permutation $\sigma$ in $\dmf{G}$ to the orthogonal operator $\dcal{T}_{\sigma}$. A subspace $U$ of $\C^n$ is said to be {\em $\dmf{G}$-invariant} if $U$ is $\rho(\sigma)$-invariant for every $\sigma$ in $\dmf{G}$.

Since every eigenspace of $\dcal{T}_G$ is $\dcal{T}_{\sigma}$-invariant for every $\sigma$ in the automorphism group $\dmf{G}$ of the grpah $G$, every eigenspace of $\dcal{T}_G$ is $\dmf{G}$-invariant as well, which is equivalent to that for any vector $\dmb{v}$ in an eigenspace $V_{\lambda}$ of $\dcal{T}_G$, $\drm{span}(\dmf{G}\dmb{v})$ is a subspace of  $V_{\lambda}$, where $\drm{span}(\dmf{G}\dmb{v})$ is the subspace in $\C^n$ spanned by the group of vectors $\rho(\sigma)(\dmb{v})=\dcal{T}_{\sigma}\dmb{v}$ obtained by taking all permutations $\sigma$ in $\dmf{G}$. Consequently, $\drm{dim}~V_{\lambda}\ge\drm{dim}~\drm{span}(\dmf{G}\dmb{v})$ for any eigenvector $\dmb{v}$ of $\dcal{T}_G$ corresponding to the eigenvalue $\lambda$.

Naturally, one may wonder for which vector $\dmb{v}$ in $V_{\lambda}$, $\drm{dim}~\drm{span}(\dmf{G}\dmb{v})$ attains the minimum value and for which vector the quantity attains the maximum value, and furthermore, how to determine the exact value of $\drm{dim}~\drm{span}(\dmf{G}\dmb{v})$  for any eigenvector $\dmb{v}$ of $\dcal{T}_G$. We shall answer those questions in the 3rd section by virtue of the linear representation of a finite group.

Lov\'{a}sz \cite{Lovasz} and Baibai \cite{Baibai} established a connection between the eigenvalues of the adjacency matrix of a Cayley graph and irreducible characters of a subgroup of its automorphism group, which inspired us to consider how to use the linear representation of $\drm{Aut}~G$ to analyze eigenspaces of $\dbf{A}(G)$ for a graph $G$ possessing a non-trivial automorphism group.

In the next section, we first formulate two fundamental characterizations to the $\dcal{T}$-invariant subspace for a digonalizable operator $\dcal{T}$ on $\C^n$ and then prove the theorem 1. In the last section, we apply the results established in the 2nd and 3rd sections to the Petersen graph $Pet$ and compute those extremal eigenvectors for $Pet$.

\section{Graph automorphisms viewed as linear operators}

The major goal of this section is to prove the theorem \ref{Lem-AutomorphismAndOperator}, which is stated in the first section. As we have seen, the key of proving that result is to characterize the $\dcal{T}_{\sigma}$-invariant subspace. Since the orthogonal operator $\dcal{T}_{\sigma}$ is diagonalizable, we first present a simple observation about invariant subspaces of a diagonalizable operator.

\begin{lem}\label{Lem-InvariantSubspace} Let $U$ be a subspace of $\C^n$ of dimension $d$, and let $\dcal{T}$ be a diagonalizable operator on $\C^n$. Then the following statements are equivalent.

\begin{enumerate}

 \item[1)] $U$ is $\dcal{T}$-invariant.

 \item[2)] $U$ has a basis consisting of eigenvectors of $\dcal{T}$.

 \item[3)] $U$ has a basis $\dmb{u}_1,\ldots,\dmb{u}_d$ such that $\dcal{T}\dmb{u}_i$ belongs to $U$ where $i=1,\ldots,d$.

\end{enumerate}  \end{lem}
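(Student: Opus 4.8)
The plan is to run the cycle of implications $2)\Rightarrow 3)\Rightarrow 1)\Rightarrow 2)$, of which only the last carries any weight. For $2)\Rightarrow 3)$: if $\dmb{u}_1,\ldots,\dmb{u}_d$ is a basis of $U$ made of eigenvectors of $\dcal{T}$, then each $\dcal{T}\dmb{u}_i$ is a scalar multiple of $\dmb{u}_i$ and hence lies in $U$. For $3)\Rightarrow 1)$: given such a basis with $\dcal{T}\dmb{u}_i\in U$ for all $i$, any $\dmb{u}=\sum_i c_i\dmb{u}_i\in U$ satisfies $\dcal{T}\dmb{u}=\sum_i c_i\dcal{T}\dmb{u}_i\in U$ by linearity, so $\dcal{T}U\subseteq U$. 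Both of these are one-line observations.

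The substantive step is $1)\Rightarrow 2)$. Since $\dcal{T}$ is diagonalizable, $\C^n=\bigoplus_{j=1}^{m}V_{\lambda_j}$, where $\lambda_1,\ldots,\lambda_m$ are the distinct eigenvalues of $\dcal{T}$ and $V_{\lambda_j}$ the corresponding eigenspaces. I would show that invariance of $U$ forces the finer decomposition $U=\bigoplus_{j=1}^{m}\bigl(U\cap V_{\lambda_j}\bigr)$. The inclusion $\supseteq$ is immediate. For $\subseteq$, take $\dmb{u}\in U$ and write $\dmb{u}=\dmb{v}_1+\cdots+\dmb{v}_m$ with $\dmb{v}_j\in V_{\lambda_j}$; applying $\dcal{T}$ repeatedly and using $\dcal{T}U\subseteq U$ yields $\dcal{T}^k\dmb{u}=\sum_{j=1}^{m}\lambda_j^k\dmb{v}_j\in U$ for $k=0,1,\ldots,m-1$. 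Because the $\lambda_j$ are pairwise distinct, the $m\times m$ Vandermonde matrix $\bigl(\lambda_j^{\,k}\bigr)$ is invertible, so each $\dmb{v}_j$ is a linear combination of $\dmb{u},\dcal{T}\dmb{u},\ldots,\dcal{T}^{m-1}\dmb{u}$ and therefore belongs to $U$; thus $\dmb{v}_j\in U\cap V_{\lambda_j}$. Taking a basis of each nonzero $U\cap V_{\lambda_j}$ and forming their union produces a basis of $U$ consisting of eigenvectors of $\dcal{T}$, and since the sum is direct this basis has exactly $d$ elements, consistent with $\dimn U=d$.

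The main (and essentially only) obstacle is the $1)\Rightarrow 2)$ step, and the device that carries it is the Vandermonde/interpolation argument isolating the eigencomponents of a vector inside $U$. An alternative for that step, which I might mention as a remark, is to note that the minimal polynomial of the restriction $\dcal{T}|_U$ divides the minimal polynomial of $\dcal{T}$, which splits into distinct linear factors over $\C$; hence $\dcal{T}|_U$ is itself diagonalizable and $U$ inherits an eigenbasis. Everything outside this step is routine linear-algebra bookkeeping.
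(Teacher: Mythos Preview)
Your argument is correct in every step. The paper, however, does not actually supply a proof of this lemma: it is introduced as ``a simple observation'' and left unproved, so there is no approach to compare against. Your Vandermonde/interpolation device for $1)\Rightarrow 2)$ is a clean and standard way to extract the eigencomponents of a vector inside an invariant subspace, and the minimal-polynomial alternative you note (that the minimal polynomial of $\dcal{T}|_U$ divides that of $\dcal{T}$ and hence splits into distinct linear factors) is an equally valid and perhaps slightly slicker route to the same conclusion.
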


For a given linear operator $\dcal{T}$, it is relatively more difficult to characterize the structure of a subspace $U$ of $\C^n$ in the case that $U$ is {\em not} $\dcal{T}$-invariant. But it is useful to investigate the structure of $U$ since that could reveal the distinction between $\dcal{T}$-invariant and not $\dcal{T}$-invariant subspaces and thus has a particular use in practice as we can see in section 4.

In accordance with Lemma \ref{Lem-InvariantSubspace}, if $\dcal{T}$ be a diagonalizable operator and $U$ is not $\dcal{T}$-invariant then $U$ can be written as direct sum of two subspaces $X$ and $Y$ enjoying that $X$ is $\dcal{T}$-invariant, $Y$ is not trivial, and $\dmb{v}$ is not an eigenvector of $\dcal{T}$ for any vector $\dmb{v}$ in $U\setminus X$. As we shall see below, such a vector $\dmb{v}$ can enable us describe a feature of $U$.

\begin{lem}\label{Lem-NonInvariantSubspace} Let $\dcal{T}$ be a diagonalizable operator, and let $U=X\oplus Y$ be a subspace of $\C^n$ of dimension $d$ such that $X$ is $\dcal{T}$-invariant, $Y$ is not trivial, and $\dmb{v}$ is not an eigenvector of $\dcal{T}$ for any vector $\dmb{v}$ in $U\setminus X$. Then not all vectors $\dcal{T}\dmb{v}, \dcal{T}^2\dmb{v},\ldots,\dcal{T}^d\dmb{v}$ belong to $U$.
\end{lem}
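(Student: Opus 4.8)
The plan is to argue by contradiction: suppose all of $\dcal{T}\dmb{v}, \dcal{T}^2\dmb{v},\ldots,\dcal{T}^d\dmb{v}$ lie in $U$, and derive that some vector of $U\setminus X$ is an eigenvector of $\dcal{T}$, contradicting the hypothesis. The natural tool is the cyclic subspace $W=\spa\{\dmb{v},\dcal{T}\dmb{v},\dcal{T}^2\dmb{v},\ldots\}$ generated by $\dmb{v}$ under $\dcal{T}$. If every $\dcal{T}^i\dmb{v}$ for $1\le i\le d$ lies in $U$, then since $\dmb{v}\in U$ and $\dimn U=d$, the $d+1$ vectors $\dmb{v},\dcal{T}\dmb{v},\ldots,\dcal{T}^d\dmb{v}$ are linearly dependent; the least such dependence gives a monic polynomial $p$ of degree $k\le d$ with $p(\dcal{T})\dmb{v}=0$, and $W$ has dimension exactly $k$ with basis $\dmb{v},\dcal{T}\dmb{v},\ldots,\dcal{T}^{k-1}\dmb{v}$, all of which lie in $U$. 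In particular $W\subseteq U$ and $W$ is $\dcal{T}$-invariant by construction.

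Next I would exploit diagonalizability of $\dcal{T}$. Since $\dcal{T}$ is diagonalizable, its restriction to the invariant subspace $W$ is also diagonalizable (the minimal polynomial of $\dcal{T}|_W$ divides that of $\dcal{T}$, which has distinct roots). Hence $W$ has a basis of eigenvectors of $\dcal{T}$, and because $W$ is a nonzero subspace, it contains at least one eigenvector $\dmb{w}$ of $\dcal{T}$. The key point is then to show we may choose such an eigenvector lying in $U\setminus X$, which is where the decomposition $U=X\oplus Y$ enters. If $W\subseteq X$, then in particular $\dmb{v}\in X$; but $\dmb{v}$ was taken to be any vector in $U\setminus X$ with $Y$ nontrivial, so one can start the argument from a vector $\dmb{v}\in U\setminus X$ (such a vector exists since $Y\ne 0$), and then $W\not\subseteq X$. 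Now decompose the eigenbasis of $W$ relative to $X$: since $X$ is $\dcal{T}$-invariant and $\dcal{T}$ is diagonalizable on all of $\C^n$, $W\cap X$ is spanned by eigenvectors of $\dcal{T}$ lying in $X$, and a dimension count shows $W$ must contain an eigenvector $\dmb{w}\notin X$, i.e. $\dmb{w}\in W\setminus X\subseteq U\setminus X$. This $\dmb{w}$ is an eigenvector of $\dcal{T}$, contradicting the hypothesis on $U\setminus X$.

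The main obstacle I anticipate is the last step: producing an eigenvector of $\dcal{T}$ that genuinely lies outside $X$, rather than merely inside $W$. The clean way to handle it is to write each vector of $U$, and in particular each basis vector $\dmb{v},\dcal{T}\dmb{v},\ldots,\dcal{T}^{k-1}\dmb{v}$ of $W$, in terms of the eigenbasis of $\dcal{T}$ on $\C^n$, and observe that $X$, being $\dcal{T}$-invariant under a diagonalizable operator, is a sum of eigenspaces-intersected-with-$X$; then $W\not\subseteq X$ forces some eigencomponent of some $\dcal{T}^i\dmb{v}$ to stick out of $X$, and projecting onto that eigenspace yields the desired eigenvector in $U\setminus X$ after using that $U=X\oplus Y$ to split off the $X$-part. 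Care is needed to ensure the projected vector is nonzero and does not fall back into $X$; this follows because the projection of $W$ onto the relevant eigenspace is nonzero while $X$ meets that eigenspace in a proper subspace, and $\spa(W)\cap X$ is contained in that proper subspace along that eigendirection. Everything else — the cyclic-subspace bookkeeping and the appeal to Lemma \ref{Lem-InvariantSubspace} — is routine.
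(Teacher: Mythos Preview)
Your proposal is correct and shares the paper's core mechanism: show that the cyclic subspace $W=\spa\{\dmb{v},\dcal{T}\dmb{v},\dcal{T}^2\dmb{v},\ldots\}$ is contained in $U$, is $\dcal{T}$-invariant, and therefore (Lemma~\ref{Lem-InvariantSubspace}) admits a basis of eigenvectors of $\dcal{T}$; since $\dmb{v}\notin X$ forces $W\not\subseteq X$, some eigenvector in that basis lies in $U\setminus X$, contradicting the hypothesis. Your execution is in fact tidier than the paper's: invoking the minimal polynomial of $\dmb{v}$ directly bypasses the paper's case split on $k_s<d$ versus $k_s=d$, and arguing from the outset with an arbitrary $\dmb{v}\in U\setminus X$ avoids the paper's preliminary claim for $\dmb{y}\in Y$ followed by the decomposition $\dmb{v}=\dmb{x}+\dmb{y}$.

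Your hesitation about the last step is unwarranted. The eigenbasis argument is already complete: if every vector in an eigenbasis of $W$ belonged to $X$ then $W\subseteq X$, which is false; that is the entire ``dimension count.'' Your alternative route through projections onto the ambient eigenspaces of $\dcal{T}$ in $\C^n$ also works, and cleanly, once you observe that each spectral projector is a polynomial in $\dcal{T}$ and hence maps $W$ into $W\subseteq U$; then the nonzero eigencomponents of $\dmb{v}$ itself already lie in $W$, and since $\dmb{v}=\sum_\lambda P_\lambda\dmb{v}\notin X$ not all of them can lie in $X$. No ``splitting off the $X$-part'' is needed.
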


\begin{proof}[\bf Proof]
To prove our assertion, we first establish an auxiliary claim as follows.
\begin{equation}\label{Claim-NonInvariant} \mbox{\em For any vector }\dmb{y}\in Y, \mbox{\em not all vectors }\dcal{T}\dmb{y}, \dcal{T}^2\dmb{y},\ldots,\dcal{T}^d\dmb{y} ~\mbox{\em  belong to }U.  \end{equation}

We show the claim above by a contradiction and assume
\begin{equation}\label{Assumption-Belonging}\mbox{all vectors }\dcal{T}\dmb{y}, \dcal{T}^2\dmb{y},\ldots,\dcal{T}^d\dmb{y}\mbox{ belong to }U.\end{equation}
Since $\drm{\dim}~U=d$, the group of vectors $\dmb{y}, \dcal{T}\dmb{y}, \dcal{T}^2\dmb{y},\ldots,\dcal{T}^d\dmb{y}$ is linear dependent. Consequently, the group must contain a group of linear independent vectors $\dmb{y}, \dcal{T}^{k_1}\dmb{y}, \dcal{T}^{k_2}\dmb{y},\ldots,$ $\dcal{T}^{k_s}\dmb{y}$ $(k_1<k_2<\cdots<k_s)$ so that each vector $\dcal{T}^{i}\dmb{y}$ $(i=0,1,\ldots,d)$ could be written as a linear combination of $\dmb{y}, \dcal{T}^{k_1}\dmb{y},\ldots, \dcal{T}^{k_s}\dmb{y}$, where the number $k_s$ is taken as small as possible.\vspace{2mm}

\noindent{\bf Case 1:} $k_s<d$.

In this case, one can easily check that the subspace $\drm{span}(\dmb{y}, \dcal{T}^{k_1}\dmb{y},\ldots, \dcal{T}^{k_s}\dmb{y})$ is $\dcal{T}$-invariant. Consequently, the subspace possesses, by Lemma \ref{Lem-InvariantSubspace}, a basis consisting of eigenvectors of $\dcal{T}$, and thus $\drm{span}(\dmb{y}, \dcal{T}^{k_1}\dmb{y},\ldots, \dcal{T}^{k_s}\dmb{y})\subseteq X$ according to the relationship between the two subspaces $X$ and $U$, which contradicts the assumption that $\dmb{y}$ is a vector in $Y$ not in $X$.\vspace{2mm}

\noindent{\bf Case 2:} $k_s=d$.

In this case, the vector $\dcal{T}^{d} \dmb{y}$ cannot be, in accordance our choice of the number $k_s$, written as a linear combination of  $\dmb{y}, \dcal{T}\dmb{y}, \dcal{T}^2\dmb{y},$ $\ldots,$  $\dcal{T}^{d-1}\dmb{y}$. Due to our assumption \eqref{Assumption-Belonging},
$$\drm{dim}~\drm{span}(\dmb{y}, \dcal{T}\dmb{y},\dcal{T}^2\dmb{y},\ldots, \dcal{T}^{d-1}\dmb{y})\le d-1.$$
Again, we can check if $\dcal{T}^{d-1} \dmb{y}$ can be written as a linear combination of  $\dmb{y}, \dcal{T}\dmb{y}, \dcal{T}^2\dmb{y},$ $\ldots,$  $\dcal{T}^{d-2}\dmb{y}$.

It is not difficult to see that we can ultimately find, in this way, a vector $\dcal{T}^j \dmb{y}$ $(1 \le j < d)$ satisfying that $\dcal{T}^j \dmb{y}$ can be written as a linear combination of vectors $\dmb{y}, \dcal{T}\dmb{y},\ldots,\dcal{T}^{j-1}\dmb{y}$. Then the group $\dmb{y}, \dcal{T}\dmb{y},\ldots,\dcal{T}^{j-1}\dmb{y}$ contains linear independent vectors $\dmb{y}, \dcal{T}^{l_1}\dmb{y}, \dcal{T}^{l_2}\dmb{y}, \ldots, \dcal{T}^{l_t}\dmb{y}$ $(l_1<l_2<\cdots<l_t<j<d)$ so that all vectors $\dmb{y}, \dcal{T}\dmb{y},\ldots,\dcal{T}^{j}\dmb{y}$ belong to the subspace $\drm{span}(\dmb{y}, \dcal{T}^{l_1}\dmb{y}, \ldots, \dcal{T}^{l_t}\dmb{y})$.

Clearly, the subspace $\drm{span}(\dmb{y}, \dcal{T}^{l_1}\dmb{y}, \ldots, \dcal{T}^{l_t}\dmb{y})$ is $\dcal{T}$-invariant. By means of the same argument in dealing with the first case, $\drm{span}(\dmb{y}, \dcal{T}^{l_1}\dmb{y}, \ldots, \dcal{T}^{l_t}\dmb{y})\subseteq X$, which contradicts the condition that $\dmb{y}$ is a vector in $Y$ not in $X$.\vspace{2mm}

In conclusion, we hold the claim \eqref{Claim-NonInvariant}.\vspace{2mm}

We now prove our assertion that for any vector $\dmb{v}$ in $U\setminus X$, not all vectors $\dcal{T}\dmb{v}, \dcal{T}^2\dmb{v},$ $\ldots,$ $\dcal{T}^d\dmb{v}$ belong to $U$.

Since $U=X\oplus Y$, the vector $\dmb{v}$ can be written uniquely as the sum of $\dmb{x}$ and $\dmb{y}$ where $\dmb{x}\in X$ and $\dmb{y}\in Y$. Since the vector $\dmb{v}$ is not in $X$ and  $Y$ is not trivial, $\dmb{y}$ is not the zero vector. By virtue of the claim \eqref{Claim-NonInvariant}, $\dcal{T}^j\dmb{y}$ does not belong to $U$ for some $j\in\{1,2,\ldots,d\}.$ Due to the assumption that $X$ is $\dcal{T}$-invariant, $\dcal{T}^j\dmb{x}$ is a vector in $X\subseteq U$. Therefore, $\dcal{T}^j\dmb{x}+\dcal{T}^j\dmb{y}=\dcal{T}^j(\dmb{x}+\dmb{y})=\dcal{T}^j\dmb{v}$ is not in $U$, and then our assertion follows.
\end{proof}

A moment's reflection shows that in general case, it is possible that a subspace $U$ of $\C^n$ is not $\dcal{T}$-invariant and $\dcal{T}\dmb{u}, \dcal{T}^2\dmb{u}, \ldots, \dcal{T}^{d-1}\dmb{u}$ belong to $U$ for some vector $\dmb{u}$ in $U$, so the assertion above provides a sharp estimate.\vspace{2mm}

Now we turn to the proof of Theorem \ref{Lem-AutomorphismAndOperator}. We assume that the $n$-dimensional vector space $\C^n$ is endowed with the {\em Hermitian inner product} $\langle\cdot,\cdot\rangle$ such that $\langle\dmb{u},\dmb{v}\rangle=\dmb{v}^*\dmb{u}=\sum_{i=1}^nu_i\overline{v_i}$ for any vectors $\dmb{u}=(u_1,\ldots,u_n)^t$ and $\dmb{v}=(v_1,\ldots,v_n)^t$ in $\C^n$. Two vectors $\dmb{u}$ and $\dmb{v}$ are said to be {\em orthogonal} if $\langle\dmb{u},\dmb{v}\rangle=0$.
A real matrix $\dbf{N}$ is {\em normal} if $\dbf{N}^t\dbf{N}=\dbf{N}\dbf{N}^t$ where $\dbf{N}^t$ denotes the transpose of the matrix $\dbf{N}$. Recall that a permutation matrix is a square matrix enjoying that exactly one entry in each row and column is equal to 1, and all other entries are 0. One can readily check that every permutation matrix is normal.

We are now ready to prove Theorem \ref{Lem-AutomorphismAndOperator}, which presents a geometric characterization of automorphisms of a graph $G$.

\begin{proof}[\bf Proof of Theorem \ref{Lem-AutomorphismAndOperator}]  As we have pointed out before, $\dbf{A}$ and $\dbf{P}_{\sigma}$ are the matrices, respectively, of two linear operators $\dcal{T}_G$ and $\dcal{T}_{\sigma}$ with respect to the standard basis $\dmb{e}_1,\ldots,\dmb{e}_n$. Consequently, $\dcal{T}_G$ and $\dcal{T}_{\sigma}$ can be replaced, respectively, with $\dbf{A}$ and $\dbf{P}_{\sigma}$ in the statements of Theorem \ref{Lem-AutomorphismAndOperator}.

We first show {\it 1)}$\Rightarrow${\it 2)}. Since for every permutation $\sigma\in\dmf{S}_n$, $\sigma$ is an automorphism of $G$ if and only if $\dbf{P}_{\sigma}^t\dbf{A}\dbf{P}_{\sigma}=\dbf{A}$, for any eigenvector $\dmb{v}$ of $\dbf{A}$ associated to some eigenvalue $\lambda$, $$\dbf{P}_{\sigma}^t\dbf{A}\dbf{P}_{\sigma}\dmb{v}=\dbf{A}\dmb{v}=\lambda\dmb{v}.$$
Consequently, $\dbf{A}\dbf{P}_{\sigma}\dmb{v}=\lambda\dbf{P}_{\sigma}\dmb{v}$ for any eigenvector $\dmb{v}$ of $\dbf{A}$, {\it i.e.,} $\dbf{P}_{\sigma}\dmb{v}$ is also an eigenvector of $\dbf{A}$ associated to the eigenvalue $\lambda$, and thus every eigenspace of $\dbf{A}$ is $\dbf{P}_{\sigma}$-invariant.\vspace{2mm}

As we mentioned above, $\dbf{P}_{\sigma}$ is a normal matrix. According to the well-known complex spectral theorem (see \cite{Axler} for details), $\C^n$ has an orthonormal basis consisting of eigenvectors of $\dbf{P}_{\sigma}$, and thus distinct eigenspaces of $\dbf{P}_{\sigma}$ are orthogonal.

If an eigenspace $V_{\lambda}$ of $\dbf{A}$ is $\dbf{P}_{\sigma}$-invariant, $V_{\lambda}$ can be written as direct sum of distinct eigenspaces of $\dbf{P}_{\sigma}$. Therefore, $V_{\lambda}$ has an orthonormal basis consisting of eigenvectors of $\dbf{P}_{\sigma}$.
Since $V_{\lambda}$ is an eigenspace of $\dbf{A}$, that basis consists of eigenvectors of $\dbf{A}$ as well. Clearly, the fact holds true for every eigenspace of $\dbf{A}$. Furthermore,  since $\dbf{A}$ is also normal, distinct eigenspaces of $\dbf{A}$ must be orthogonal in virtue of the complex spectral theorem. Therefore, $\C^n$ has an orthonormal basis consisting of eigenvectors of both $\dbf{A}$ and $\dbf{P}_{\sigma}$, {\it i.e.,} the statement {\it 3)} follows from the statement {\it 2)}.\vspace{2mm}

According to Lemma \ref{Lem-InvariantSubspace}, one can easily see that the statement {\it 3)} implies the statement {\it 4)}. \vspace{2mm}

Finally, we prove that {\it 4)} $\Rightarrow$ {\it 1)}. As we mentioned above, $\sigma\in \drm{Aut}~G$ if and only if $\dbf{P}_{\sigma}^t\dbf{A}\dbf{P}_{\sigma}=\dbf{A},$ which is equivalent to $\dbf{P}_{\sigma}\dbf{A}=\dbf{A}\dbf{P}_{\sigma}$.

Let $\dmb{x}_1,\ldots,\dmb{x}_n$ be an orthonormal basis of $\C^n$, which consists of eigenvectors of $\dbf{P}_{\sigma}$ such that $\dbf{P}_{\sigma}\dmb{x}_i=\eta_i\dmb{x}_i$, $i=1,\ldots,n$. Since every eigenspace of $\dbf{P}_{\sigma}$ is $\dbf{A}$-invariant, for every $\dmb{x}_i$ we have
$$\dbf{P}_{\sigma}\dbf{A}\dmb{x}_i=\eta_i\dbf{A}\dmb{x}_i=\dbf{A}\eta_i\dmb{x}_i
=\dbf{A}\dbf{P}_{\sigma}\dmb{x}_i,~~~i=1,\ldots,n.$$
For an arbitrary vector $\dmb{v}=\sum_{i=1}^na_i\dmb{x}_i$ in $\C^n$,
$$\dbf{P}_{\sigma}\dbf{A}\dmb{v}=\dbf{P}_{\sigma}\dbf{A}\sum_{i=1}^na_i\dmb{x}_i
=\sum_{i=1}^na_i\dbf{P}_{\sigma}\dbf{A}\dmb{x}_i=
\sum_{i=1}^na_i\dbf{A}\dbf{P}_{\sigma}\dmb{x}_i
=\dbf{A}\dbf{P}_{\sigma}\sum_{i=1}^na_i\dmb{x}_i
=\dbf{A}\dbf{P}_{\sigma}\dmb{v}.$$
As a result, $\dbf{P}_{\sigma}\dbf{A}=\dbf{A}\dbf{P}_{\sigma}$, and thus the statement {\it 1)} follows.
\end{proof}

\section{The symmetric and asymmetric eigenvectors}

For brevity, we denote by $\dmf{G}$ the automorphism group $\drm{Aut}~G$ of a graph $G$, which can be, as we have explained in the first section, regarded as a subgroup of $\dmf{S}_n$, provided that $G$ has $n$ vertices. Recall that the permutation representation $\rho$ of $\dmf{G}$ in $\C^n$ we are interested in here is a map from $\dmf{G}$ to $\GL(\C^n)$ such that $\rho(\sigma)=\dcal{T}_{\sigma}$ for every permutation $\sigma\in\dmf{G}$, where $\dcal{T}_{\sigma}$ is the orthogonal operator on $\C^n$ enjoying that $\dcal{T}_{\sigma}\dmb{u} = (u_{\sigma^{-1}1},u_{\sigma^{-1}2},\ldots,u_{\sigma^{-1} n})^t$ for any vector $\dmb{u}=(u_1,\ldots,u_n)^t$ in $\C^n$.

Let $\dbf{A}$ be the adjacency matrix of $G$. Due to Theorem \ref{Lem-AutomorphismAndOperator}, every eigenspace of $\dbf{A}$ is $\dcal{T}_{\sigma}$-invariant, and thus those subspaces are also $\dmf{G}$-invariant. Therefore   the permutation representation $\rho$ of $\dmf{G}$ provides some information about the relationship between the structure of $G$ and eigenvectors of $\dbf{A}$, which will be shown by examples in the next section.

For a given eigenspace $V_{\lambda}$ of $\dbf{A}$, those vectors $\dmb{v}$ in $V_{\lambda}$, enjoying that $\drm{dim}~\drm{span}(\dmf{G}\dmb{v})$ can attain the minimum value, are called {\em symmetric vectors} of $V_{\lambda}$, while those vectors $\dmb{v}$, enjoying that $\drm{dim}~\drm{span}(\dmf{G}\dmb{v})$ can attain the maximum value, are called {\em asymmetric vectors} of $V_{\lambda}$. In this section, we characterize symmetric vectors and the asymmetric vectors of $V_{\lambda}$, respectively, and furthermore, we determine the exact value of $\drm{dim}~\drm{span}(\dmf{G}\dmb{v})$ for any eigenvector $\dmb{v}$ of $\dbf{A}$.\vspace{2mm}

Let $\rho$ and $\rho'$ be two representations of $\dmf{G}$ in vector spaces $V$ and $V'$. These representations $\rho$ and $\rho'$ are said to be {\em isomorphic} if there exists a linear isomorphism $\phi:V \rightarrow V'$ which ``transform'' $\rho$ into $\rho'$, {\it i.e.,}
$$\phi\circ\rho(\sigma)=\rho'(\sigma)\circ\phi,~~~~~\forall \sigma\in\dmf{G}.$$
Since $\phi$ is an isomorphism between $V$ and $V'$, $\drm{dim}~V=\drm{dim}~V'$. Let the dimension of $V$ be equal to $m$. Then one can readily see that for every $\sigma\in\dmf{G}$,  the matrices of $\rho(\sigma)$ and $\rho'(\sigma)$ are the same with respect to the basses $\dmb{b}_1, \ldots, \dmb{b}_m$ in $V$ and $\phi(\dmb{b}_1), \ldots, \phi(\dmb{b}_m)$ in $V'$, respectively. The isomorphism between $\rho$ and $\rho'$ indicates that the two representations have a close relationship, which makes, as we shall see in what follows, some trouble in characterizing those asymmetric eigenvectors.

Let $W$ be a non-trivial subspace in $\C^n$. The representation $\rho$ of $\dmf{G}$ in $W$ is said to be {\em irreducible} if $W$ is $\dmf{G}$-invariant and there is no non-trivial subspace of $W$ which is also $\dmf{G}$-invariant. It is not difficult to see that the subspace $\drm{span}(\dmf{G}\dmb{v})$ is $\dmf{G}$-invariant for any vector $\dmb{v}$ in $V_{\lambda}$, so $\drm{span}(\dmf{G}\dmb{v})$ can be decomposed as a direct sum of irreducible representations of $\rho$. On the other hand, if $W$ is an irreducible representation of $\rho$, then for any non-trivial vector $\dmb{w} \in W$, $\drm{span}(\dmf{G}\dmb{w})=W$ in accordance with the definition of the irreducible representation. Therefore, symmetric vectors of $V_{\lambda}$ are those which belong to some irreducible representation $W$ such that $\drm{dim}~W \le \drm{dim}~U$ for any $\dmf{G}$-invariant and non-trivial subspace $U$ of $V_{\lambda}$. \vspace{2mm}

It is difficult, however, to characterize those asymmetric eigenvectors of $\dbf{A}$. Before turning to that tough part in this section, we first present a simple observation which enables us to formulate an upper bound of the dimension of the subspace $\spa(\dmf{G} \dmb{v})$. Set
$$\dmf{G}[\dmb{v}]=\{\sigma\in \dmf{G} : \dmb{v} \text{ and } \rho(\sigma) \dmb{v} \text{ are linear dependent}\}.$$
Then one can readily verify that $\dmf{G}[\dmb{v}]$ is a subgroup of $\dmf{G}$, for if $\sigma_1,\sigma_2\in \dmf{G}[\dmb{v}]$ then $\sigma_1\sigma_2\in \dmf{G}[\dmb{v}]$. Furthermore, it is plain to see that $\rho(\alpha) \dmb{v}$ and $\rho(\beta) \dmb{v}$ are linear independent if $\beta^{-1}\alpha\notin \dmf{G}[\dmb{v}]$, so the following is relevant.

\begin{lem}\label{Lem-SimpleObservation}
$$\mathrm{dim}~\mathrm{span}(\dmf{G} \dmb{v})\leq\dugi{\dmf{G}}{\dmb{v}},$$ and the equality holds if and only if
$$\mathrm{span}(\dmf{G} \dmb{v})=\bigoplus_{i=1}^m \mathrm{span}(\rho(\sigma_i)\dmb{v})\Rightarrow \dmf{G} = \bigcup_{i=1}^m\sigma_i \dmf{G}[\dmb{v}],$$ where $\sigma_i\in \dmf{G}$ and $m=\dugi{\dmf{G}}{\dmb{v}}$.
\end{lem}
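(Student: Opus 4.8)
The plan is to analyze how the translates $\rho(\sigma)\dmb{v}$, $\sigma\in\dmf{G}$, distribute among the left cosets of the subgroup $\dmf{G}[\dmb{v}]$, and to extract both the bound and the equality condition from that analysis. We may assume $\dmb{v}\neq\dmb{0}$, the case $\dmb{v}=\dmb{0}$ being checked directly. Write $\dmf{H}=\dmf{G}[\dmb{v}]$ (a subgroup of $\dmf{G}$, as already observed), put $m=\dugi{\dmf{G}}{\dmb{v}}$, and fix representatives $\sigma_1,\ldots,\sigma_m$ with $\dmf{G}=\bigcup_{i=1}^m\sigma_i\dmf{H}$ a disjoint union. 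The first step is the observation that if $\sigma\in\sigma_i\dmf{H}$, say $\sigma=\sigma_i\gamma$ with $\gamma\in\dmf{H}$, then $\rho(\gamma)\dmb{v}=c\dmb{v}$ for some scalar $c$, which is nonzero since $\rho(\gamma)$ is invertible; as $\rho$ is a homomorphism, $\rho(\sigma)\dmb{v}=\rho(\sigma_i)\rho(\gamma)\dmb{v}=c\,\rho(\sigma_i)\dmb{v}$, so $\rho(\sigma)\dmb{v}$ lies on the line $\mathrm{span}(\rho(\sigma_i)\dmb{v})$. Hence $\dmf{G}\dmb{v}\subseteq\bigcup_{i=1}^m\mathrm{span}(\rho(\sigma_i)\dmb{v})$, whence $\mathrm{span}(\dmf{G}\dmb{v})=\mathrm{span}(\rho(\sigma_1)\dmb{v},\ldots,\rho(\sigma_m)\dmb{v})$ is spanned by $m$ vectors, so that $\mathrm{dim}~\mathrm{span}(\dmf{G}\dmb{v})\le m$, which is the asserted inequality.

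For the equality clause I would argue as follows. From the spanning set just exhibited, $\mathrm{dim}~\mathrm{span}(\dmf{G}\dmb{v})=m$ holds if and only if $\rho(\sigma_1)\dmb{v},\ldots,\rho(\sigma_m)\dmb{v}$ are linearly independent, i.e.\ if and only if $\mathrm{span}(\dmf{G}\dmb{v})=\bigoplus_{i=1}^m\mathrm{span}(\rho(\sigma_i)\dmb{v})$ for these representatives. Conversely, a direct sum of $m$ nonzero lines has dimension exactly $m$, so the existence of \emph{any} $\tau_1,\ldots,\tau_m\in\dmf{G}$ with $\mathrm{span}(\dmf{G}\dmb{v})=\bigoplus_{i=1}^m\mathrm{span}(\rho(\tau_i)\dmb{v})$ already forces $\mathrm{dim}~\mathrm{span}(\dmf{G}\dmb{v})=m$. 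Thus equality holds precisely when $\mathrm{span}(\dmf{G}\dmb{v})$ admits an $m$-fold direct-sum decomposition into lines spanned by translates of $\dmb{v}$.

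It then remains to verify the implication displayed in the statement: if $\mathrm{span}(\dmf{G}\dmb{v})=\bigoplus_{i=1}^m\mathrm{span}(\rho(\tau_i)\dmb{v})$ with $\tau_i\in\dmf{G}$, then $\dmf{G}=\bigcup_{i=1}^m\tau_i\dmf{H}$. Indeed, directness of the sum makes $\rho(\tau_1)\dmb{v},\ldots,\rho(\tau_m)\dmb{v}$ linearly independent (one nonzero vector from each summand), so in particular no two of them are linearly dependent; but by the first step $\rho(\tau_i)\dmb{v}$ and $\rho(\tau_j)\dmb{v}$ \emph{are} linearly dependent whenever $\tau_i\dmf{H}=\tau_j\dmf{H}$. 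Hence $\tau_1,\ldots,\tau_m$ lie in $m$ pairwise distinct left cosets of $\dmf{H}$; since $\dmf{H}$ has exactly $m$ left cosets in $\dmf{G}$, these exhaust them, i.e.\ $\dmf{G}=\bigcup_{i=1}^m\tau_i\dmf{H}$. Assembling the three steps establishes the lemma.

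I do not expect a genuine obstacle here, the content being elementary; the only delicate point is reading the equality clause correctly, namely as the assertion that a decomposition of $\mathrm{span}(\dmf{G}\dmb{v})$ into $m$ lines spanned by translates of $\dmb{v}$ exists, and that every such decomposition then satisfies $\dmf{G}=\bigcup_{i}\sigma_i\dmf{G}[\dmb{v}]$, the existence being what truly characterizes the equality. Everything else rests on two uses of the invertibility of each $\rho(\sigma)$: that $\rho(\sigma)\dmb{v}\neq\dmb{0}$, and that $\rho(\sigma)$ preserves linear independence; together with the subgroup property of $\dmf{G}[\dmb{v}]$ and the coset observations, both of which are already recorded immediately before the lemma.
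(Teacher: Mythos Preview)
The paper does not actually supply a proof of this lemma: it records the two preliminary facts that $\dmf{G}[\dmb{v}]$ is a subgroup and that $\rho(\alpha)\dmb{v}$, $\rho(\beta)\dmb{v}$ are linearly independent when $\beta^{-1}\alpha\notin\dmf{G}[\dmb{v}]$, then states the lemma as a ``simple observation'' and moves on. Your argument is correct and is precisely the natural fleshing-out of those hints; in particular your key step---that every $\rho(\sigma)\dmb{v}$ lies on one of the $m$ lines $\mathrm{span}(\rho(\sigma_i)\dmb{v})$ indexed by the cosets of $\dmf{G}[\dmb{v}]$---is the contrapositive of the paper's remark, and your handling of the (admittedly awkwardly phrased) equality clause is sound.
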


One can readily see that the upper bound above is not tight in general, for $\dugi{\dmf{G}}{\dmb{v}}$ indicates the number of vectors in the group of vectors $\dmf{G}\dmb{v}$ which are pairwise independent, while $\mathrm{dim}~\mathrm{span}(\dmf{G} \dmb{v})$ is the number of vectors that a maximum linearly independent group in $\dmf{G}\dmb{v}$ can have. However, the inequality does connect two distinct objects which look irrelevant, so it seems to deserve some mention. \vspace{2mm}

Now we turn to asymmetric eigenvectors of $\dbf{A}$. As we have explained before, a symmetric vector of a eigenspace $V_{\lambda}$ must be in one irreducible representation of $\rho$ in $V_{\lambda}$ whose dimension is the smallest one among all irreducible representations of $\rho$ in $V_{\lambda}$. Accordingly, one may reasonably expect that
$$\mbox{an asymmetric vector of }V_{\lambda}\mbox{ would have the form } \dmb{w}_1+\dmb{w}_2+\cdots+\dmb{w}_k,$$
where $\dmb{w}_i\in W_i$ $(i=1,\ldots,k)$ and $\oplus_{i=1}^k W_i=V_{\lambda}$ is the decomposition into irreducible representations of  $\rho$ in $V_{\lambda}$. Fundamentally, it is right, but generally, it is not.

The trouble is that if there are some irreducible representations of $\rho$ in $V_{\lambda}$ which are isomorphic to one another, then the decomposition $\oplus_{i=1}^k W_i$ is not unique, as Lemma \ref{LemIsomorphism} illustrates below, so we cannot, in that case, arbitrarily choose a vector in $W_i$ and obtain an asymmetric vector of $V_{\lambda}$ in the trivial way above. Theorem \ref{Thm-2ndCaseForEigenvector} presents the key requirement that those vectors $\dmb{w}_i$ should enjoy, to obtain an asymmetric vector in the form $\sum_{i=1}^k \dmb{w}_i$.

However, if any two irreducible representation in the decomposition $\oplus_{i=1}^k W_i$ are not isomorphic, then $\sum_{i=1}^k \dmb{w}_i$ is indeed an asymmetric vector of $V_{\lambda}$, as shown below by Theorem \ref{Thm-1stCaseForEigenvector}. In that case, $\drm{span}(\dmf{G}(\sum_{i=1}^k \dmb{w}_i)) = V_{\lambda}$.

Before we go further, we make a comment that the proofs of those results in what follows work not only for the permutation representation we consider here but also for any linear representation of $\dmf{G}$ in $\C^n$. Since every eigenspace $V_{\lambda}$ of $\dbf{A}$ is $\dmf{G}$-invariant, it is not necessary to distinguish distinct eigenspaces of $\dbf{A}$ in stating our assertions, and thus we uniformly formulate those assertions for the whole space $\C^n$.

\begin{lem}\label{LemIsomorphism} If $\phi$ is an isomorphism between two irreducible representations $W_1$ and $W_2$ of $\rho$ in $\C^n$, then $\spa(\dmf{G}[\dmb{w}_1+\phi(\dmb{w}_1)])$ is isomorphic to $W_1$ where $\dmb{w}_1$ is a non-trivial vector in $W_1$.
\end{lem}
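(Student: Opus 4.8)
The plan is to realise $\spa\big(\dmf{G}(\dmb{w}_1+\phi(\dmb{w}_1))\big)$ as a ``twisted diagonal'' copy of $W_1$ sitting inside $W_1\oplus W_2$, and then to use the irreducibility of $W_1$ to conclude that the $\dmf{G}$-orbit of $\dmb{w}_1+\phi(\dmb{w}_1)$ already spans all of that copy. Concretely, I would introduce the linear map $\psi\colon W_1\to\C^n$, $\psi(\dmb{w})=\dmb{w}+\phi(\dmb{w})$, whose image lies in $W_1+W_2$. The first thing to check is that $\psi$ intertwines the $\dmf{G}$-actions: for $\sigma\in\dmf{G}$ and $\dmb{w}\in W_1$,
$$\psi(\rho(\sigma)\dmb{w})=\rho(\sigma)\dmb{w}+\phi(\rho(\sigma)\dmb{w})=\rho(\sigma)\dmb{w}+\rho(\sigma)\phi(\dmb{w})=\rho(\sigma)\psi(\dmb{w}),$$
where the middle equality is precisely the equivariance of $\phi$. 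So $\psi$ is a homomorphism of representations.

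Next I would argue that $\psi$ is injective, hence $\psi(W_1)$ is a subspace of $\C^n$ on which $\dmf{G}$ acts, via $\psi$, by a representation isomorphic to $W_1$ — in particular $\psi(W_1)$ is irreducible. For injectivity, $\ker\psi$ is a $\dmf{G}$-invariant subspace of the irreducible $W_1$, so it is $\{\dmb{0}\}$ or all of $W_1$; the latter would force $\phi(\dmb{w})=-\dmb{w}$ for every $\dmb{w}\in W_1$, which is possible only when $W_1=W_2$ and $\phi=-\,\mathrm{id}$ (by Schur's lemma). This degenerate case does not arise in the intended application, where $W_1$ and $W_2$ are distinct irreducible summands of a decomposition, so that $W_1\cap W_2=\{\dmb{0}\}$; there the only $\dmb{w}$ with $\phi(\dmb{w})=-\dmb{w}$ is $\dmb{w}=\dmb{0}$, and $\psi$ is injective. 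In particular $\psi(\dmb{w}_1)=\dmb{w}_1+\phi(\dmb{w}_1)\neq\dmb{0}$ since $\dmb{w}_1\neq\dmb{0}$.

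Finally I would compare $U:=\spa\big(\dmf{G}(\dmb{w}_1+\phi(\dmb{w}_1))\big)$ with $\psi(W_1)$. Since $\dmb{w}_1+\phi(\dmb{w}_1)=\psi(\dmb{w}_1)\in\psi(W_1)$ and $\psi(W_1)$ is $\dmf{G}$-invariant, the whole orbit lies in $\psi(W_1)$, so $U\subseteq\psi(W_1)$. Conversely $U$, being the span of a $\dmf{G}$-orbit, is $\dmf{G}$-invariant, and it is non-trivial because $\psi(\dmb{w}_1)\neq\dmb{0}$; as $\psi(W_1)$ is irreducible, its only non-trivial $\dmf{G}$-invariant subspace is itself, whence $U=\psi(W_1)\cong W_1$. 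The routine steps are the intertwining identity and the standard fact that the span of a $\dmf{G}$-orbit is $\dmf{G}$-invariant; the one genuinely delicate point is the injectivity/non-vanishing argument in the second paragraph, where irreducibility of the $W_i$ (and Schur's lemma in the coincident case) is indispensable, and where one sees that the hypothesis is really to be read with $W_1$ and $W_2$ distinct.
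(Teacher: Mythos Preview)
Your proof is correct and takes essentially the same route as the paper: both build the intertwiner $\psi(\dmb{w})=\dmb{w}+\phi(\dmb{w})$ from $W_1$ into $\C^n$ and identify its image with the orbit span, though you argue more cleanly via irreducibility while the paper verifies equivariance by a longer computation on an explicit orbit basis $\rho(\sigma_1)\dmb{w}_1,\ldots,\rho(\sigma_d)\dmb{w}_1$ of $W_1$. You are also right to flag that injectivity of $\psi$ tacitly requires $W_1\cap W_2=\{\dmb{0}\}$, a hypothesis the paper uses without comment when it asserts that the vectors $\rho(\sigma_i)\dmb{w}_1+\rho(\sigma_i)\phi(\dmb{w}_1)$ are linearly independent.
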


\begin{proof}[\bf Proof] Since $W_1$ is an irreducible representation of $\rho$, one can choose some elements $\sig_1=1_\dmf{G},\ldots,\sig_d$ of $\dmf{G}$ such that $\rho(\sig_1)(\dmb{w}_1)=\dmb{w}_1,\ldots,\rho(\sig_d)(\dmb{w}_1)$ form a basis of $W_1$. It is plain to verify that $\rho(\sig_1)(\dmb{w}_1)+\rho(\sig_1)\phi(\dmb{w}_1),\ldots,\rho(\sig_d)(\dmb{w}_1)+\rho(\sig_d)\phi(\dmb{w}_1)$ form a basis of the subspace $\spa(\dmf{G}[\dmb{w}_1+\phi(\dmb{w}_1)])$.

We further define a linear map
$$\psi:W_1\rightarrow \spa(\dmf{G}[\dmb{w}_1+\phi(\dmb{w}_1)])$$ satisfying
$$\psi:\rho(\sig_i)(\dmb{w}_1)\mapsto\rho(\sig_i)(\dmb{w}_1)+\rho(\sig_i)\phi(\dmb{w}_1), 1\le i\le d.$$
Clearly, $\psi$ is an isomorphism between the two subspaces $W_1$ and $\spa(\dmf{G}[\dmb{w}_1+\phi(\dmb{w}_1)])$. Moreover, it is also routine to check that $\psi$ is an isomorphism between the two representations $W_1$ and $\spa(\dmf{G}[\dmb{w}_1+\phi(\dmb{w}_1)])$. In fact, for any vector $\dmb{w}$ in $W_1$, $\dmb{w}=\sum_{i=1}^dx_i\rho(\sig_i)(\dmb{w}_1)$ where $x_i\in \C$ ($i=1,\ldots,d$), for $\rho(\sig_1)(\dmb{w}_1)=\dmb{w}_1,\ldots,\rho(\sig_d)(\dmb{w}_1)$ form a basis of $W_1$. Picking arbitrarily an element $\tau\in \dmf{G}$, we assume $\rho(\tau)(\dmb{w})=\sum_{i=1}^dy_i\rho(\sig_i)(\dmb{w}_1)$ where $y_i\in \C$ ($i=1,\ldots,d$). Consequently,
\begin{align*}\psi\rho(\tau)(\dmb{w}) & = \psi\left[\sum_{i=1}^dy_i\rho(\sig_i)(\dmb{w}_1)\right]\\
& = \sum_{i=1}^dy_i\psi\rho(\sig_i)(\dmb{w}_1)\\
& = \sum_{i=1}^d y_i\left(\rho(\sig_i)(\dmb{w}_1)+\rho(\sig_i)\phi(\dmb{w}_1)\right).\end{align*}
On the other hand,
\begin{align*}\rho(\tau)\psi(\dmb{w}) & = \rho(\tau)\psi\left[\sum_{i=1}^d    x_i\rho(\sig_i)(\dmb{w}_1)\right]\\
& = \rho(\tau)\left[\sum_{i=1}^d x_i\psi\rho(\sig_i)(\dmb{w}_1)\right]\\
& = \rho(\tau)\left[\sum_{i=1}^d x_i\left(\rho(\sig_i)(\dmb{w}_1)+\rho(\sig_i)\phi(\dmb{w}_1)\right)\right]\\
& = \rho(\tau)\left[\sum_{i=1}^d x_i\rho(\sig_i)(\dmb{w}_1)\right] + \rho(\tau)\left[\sum_{i=1}^d x_i\rho(\sig_i)\phi(\dmb{w}_1)\right] \\
& = \sum_{i=1}^d y_i\rho(\sig_i)(\dmb{w}_1) + \phi\rho(\tau)\left[\sum_{i=1}^d x_i\rho(\sig_i)(\dmb{w}_1)\right]\\
& = \sum_{i=1}^d y_i\rho(\sig_i)(\dmb{w}_1) + \sum_{i=1}^d y_i\rho(\sig_i)\phi(\dmb{w}_1).\end{align*}
Consequently, $\psi\rho(\tau)=\rho(\tau)\psi$ for any $\tau\in \dmf{G}$, and thus $\psi$ is an isomorphism between the two representations $W_1$ and $\spa(\dmf{G}[\dmb{w}_1+\phi(\dmb{w}_1)])$.
\end{proof}

We are now ready to show that $\sum_{s=1}^k \dmb{w}_s$ is an asymmetric vector in the relatively simple case that any two irreducible representations in the decomposition of $\C^n$ into irreducible representations of $\rho$ are not isomorphic.

\begin{thm}\label{Thm-1stCaseForEigenvector} Suppose $\C^n=\oplus_{s=1}^kW_s$ is a decomposition into irreducible representations of $\rho$. Then three assertions below are equivalent.

\begin{enumerate}

\item[1)] For any two irreducible representations $W_i$ and $W_j$ $(i\neq j)$, $W_i$ is not isomorphic to $W_j$.

\item[2)] If $U$ is a $\dmf{G}$-invariant subspace of $\C^n$ under the representation $\rho$ with a decomposition $\oplus_{t=1}^l X_{t} = U$ into irreducible representations, then the decomposition is unique, {\it i.e.,} for each $X_{t}$ $(t=1,\ldots,l)$ there exists uniquely an irreducible representation $W_s$ $(1\le s \le k)$ in the decomposition of $\C^n$ such that $X_{t}= W_s$.

\item[3)] $\spa(\dmf{G}(\sum_{s=1}^k \dmb{w}_s))=\C^n$ where $\dmb{w}_s$ is a non-trivial vector in $W_s$ $(s=1,\ldots,k)$.

\end{enumerate}
\end{thm}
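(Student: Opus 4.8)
The plan is to prove the three assertions pairwise equivalent by running the cycle $1)\Rightarrow 2)\Rightarrow 3)\Rightarrow 1)$. Two standard facts about the finite group $\dmf{G}$ will be used repeatedly. First, since each $\rho(\sig)$ is unitary, the orthogonal complement of a $\dmf{G}$-invariant subspace is again $\dmf{G}$-invariant, so every $\dmf{G}$-invariant subspace of $\C^n$ is a direct sum of irreducible subrepresentations. Second (Schur's lemma, in its elementary form): a $\dmf{G}$-equivariant linear map between two irreducible subrepresentations is either $0$ or a linear isomorphism, since its kernel and image are subrepresentations. For each $s$ write $p_s:\C^n\to W_s$ for the projection along $\bigoplus_{s'\ne s}W_{s'}$; because $\rho(\sig)W_{s'}=W_{s'}$ for all $s'$ and all $\sig$, each $p_s$ is $\dmf{G}$-equivariant. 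To prove $1)\Rightarrow 2)$, I would take a $\dmf{G}$-invariant $U$ with irreducible decomposition $\bigoplus_{t=1}^l X_t=U$ and fix a summand $X_t$. The restriction $p_s|_{X_t}$ is $\dmf{G}$-equivariant, hence zero or an isomorphism; since $X_t\ne 0$ and $\bigcap_s\ker p_s=0$, at least one $p_s|_{X_t}$ is an isomorphism, giving $X_t\cong W_s$. Hypothesis $1)$ forces this $s$ to be unique, so $p_{s'}|_{X_t}=0$ for all $s'\ne s$; as $\bigcap_{s'\ne s}\ker p_{s'}=W_s$ this yields $X_t\subseteq W_s$, and comparing dimensions gives $X_t=W_s$ (uniqueness of $s$ being clear, since distinct summands $W_s$ intersect trivially).

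For $2)\Rightarrow 3)$, fix nonzero $\dmb{w}_s\in W_s$, set $\dmb{v}=\sum_{s=1}^k\dmb{w}_s$ and $U=\spa(\dmf{G}\dmb{v})$, a $\dmf{G}$-invariant subspace. Decomposing $U$ into irreducibles and invoking $2)$ gives $U=\bigoplus_{s\in S}W_s$ for some $S\subseteq\{1,\dots,k\}$. Using $p_s\rho(\sig)=\rho(\sig)p_s$ one computes $p_s(U)=\spa(\dmf{G}\,p_s(\dmb{v}))=\spa(\dmf{G}\dmb{w}_s)$, and the latter equals $W_s$ because $W_s$ is irreducible and $\dmb{w}_s\ne 0$. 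If some index $s$ were missing from $S$, then $U\subseteq\bigoplus_{s'\ne s}W_{s'}=\ker p_s$, contradicting $p_s(U)=W_s\ne 0$. Hence $S=\{1,\dots,k\}$ and $\spa(\dmf{G}\dmb{v})=\C^n$; I note that this computation is valid for every choice of nonzero $\dmb{w}_s$, which is what $3)$ asserts.

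For $3)\Rightarrow 1)$ I would argue the contrapositive, and this is where Lemma \ref{LemIsomorphism} does the work. Assume $\phi:W_i\to W_j$ is an isomorphism of subrepresentations with $i\ne j$. Pick any nonzero $\dmb{w}_i\in W_i$, put $\dmb{w}_j:=\phi(\dmb{w}_i)$ (nonzero, since $\phi$ is injective), choose nonzero $\dmb{w}_s\in W_s$ arbitrarily for $s\ne i,j$, and set $\dmb{v}=\sum_{s=1}^k\dmb{w}_s$. Let $\pi:\C^n\to W_i\oplus W_j$ be the equivariant projection. Then $\pi(\spa(\dmf{G}\dmb{v}))=\spa(\dmf{G}\,\pi(\dmb{v}))=\spa(\dmf{G}(\dmb{w}_i+\phi(\dmb{w}_i)))$, which by Lemma \ref{LemIsomorphism} is isomorphic to $W_i$ and hence has dimension $\drm{dim}~W_i<\drm{dim}(W_i\oplus W_j)$. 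Consequently $\spa(\dmf{G}\dmb{v})\ne\C^n$, so $3)$ fails for this choice of vectors, completing the cycle.

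I expect $1)\Rightarrow 2)$ to be the main obstacle: it is the only implication that genuinely needs Schur's lemma together with the non-isomorphism hypothesis, and it requires one to treat the projections $p_s$ as morphisms of representations (not merely linear maps) and to rule out repeated isotypic components. Once $1)\Rightarrow 2)$ is secured, the remaining two steps are short, with Lemma \ref{LemIsomorphism} supplying the essential dimension drop needed for $3)\Rightarrow 1)$.
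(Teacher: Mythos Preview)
Your proof is correct and follows the same cycle $1)\Rightarrow 2)\Rightarrow 3)\Rightarrow 1)$ as the paper, using Schur's lemma for the first implication and Lemma~\ref{LemIsomorphism} for the last. The only difference is a minor streamlining in $1)\Rightarrow 2)$: the paper averages the projections over $\dmf{G}$ to make them equivariant, whereas you observe (correctly) that the natural projections $p_s$ along the decomposition $\bigoplus_s W_s$ are already $\dmf{G}$-equivariant since each summand is invariant, so the averaging step is unnecessary.
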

\begin{proof}[\bf Proof] We first show that $\it 1)\Rightarrow 2)$. Clearly, it suffices to show that if $X_t\cong W_s$ then $X_t=W_s$.

Let $p_i$ be the projection of $\C^n$ onto the irreducible representation $W_i$, $i=1,\ldots,k$. Set $$p_i^0=\frac{1}{|\dmf{G}|}\sum_{\sig\in \dmf{G}}\rho(\sig)p_i\rho(\sig^{-1}), ~~~ i=1,\ldots,k.$$
Since $W_i$ is $\dmf{G}$-invariant, we have, for any $\dmb{w}_i\in W_i$ and $\sig\in \dmf{G}$,
$$p_i\rho(\sig^{-1})(\dmb{w}_i)=\rho(\sig^{-1})(\dmb{w}_i)\Rightarrow \rho(\sig)p_i\rho(\sig^{-1})(\dmb{w}_i)=\dmb{w}_i\Rightarrow p_i^0(\dmb{w}_i)=\dmb{w}_i.$$
For another irreducible representation $W_j$ of $\C^n$, one can readily see, by the same reason, that $p_i^0\dmb{w}_j=0$ for every vector $\dmb{w}_j\in W_j$. Consequently, $p_i^0$ is also a projection of $\C^n$ onto $W_i$. Furthermore, for any $\tau\in \dmf{G}$, we have, according to the definition of $p_i^0$,
$$\rho(\tau)p_i^0\rho(\tau^{-1})=\frac{1}{|\dmf{G}|}\sum_{\sig\in \dmf{G}}\rho(\tau)\rho(\sig)p_i\rho(\sig^{-1})\rho(\tau^{-1})= \frac{1}{|\dmf{G}|}\sum_{\sig\in \dmf{G}}\rho(\tau\sig)p_i\rho[(\tau\sig)^{-1}]=p_i^0. $$
Thus, $\rho(\tau)p_i^0=p_i^0\rho(\tau)$ for any $\tau\in \dmf{G}$ and $p_i^0$ $(i=1,\ldots,k)$.

We now consider the restriction of $p_i^0$ to $X_t$ denoted by $p_i^0|_{X_t}$, which is a projection of $X_t$ onto $W_i$. Further, noting the fact that $p_i^0$ and $\rho(\tau)$ are commutative for any $\tau\in \dmf{G}$ and $p_i^0$ $(i=1,\ldots,k)$, one can readily see that   $\rho(\tau)p_i^0|_{X_t}=p_i^0|_{X_t}\rho(\tau)$ for any $\tau\in \dmf{G}$ and $p_i^0|_{X_t}$.

According to the assumption that $W_s\ncong W_i$ for any irreducible representation $W_i$ $(i\neq s)$ of $\C^n$, it follows from Schur's Lemma that $p_i^0|_{X_t}=0$. By the fact that $p_i^0|_{X_t}$ $(i=1,\ldots,k)$ is a projection of $X_t$ onto $W_i$, $\sum_{i=1}^k p_i^0|_{X_t}$ is the identity map. Further, since $X_t\cong W_s$,  $p_s^0|_{X_t}:X_t\rightarrow W_s$ is an isomorphism, and therefore, $p_s^0|_{X_t}$ is an identity map. Consequently,  $X_t=W_s$.\vspace{2mm}

Next, we show that $\it 2)\Rightarrow 3)$. Set $\dmb{v}=\sum_{s=1}^k \dmb{w}_s$ for brevity. We assume, for a contradiction, that \begin{equation}\label{Assumption-1}\spa(\dmf{G}\dmb{v})=U\subsetneq \C^n.\end{equation}
Clearly, $U$ is a $\dmf{G}$-invariant subspace of $\C^n$ under the representation $\rho$. By means of the statement {\it 2)}, for any irreducible representation $X_t$ of $U$, $X_t$ is also a member of the decomposition of $\C^n$ into irreducible representations. According to the assumption \eqref{Assumption-1}, some irreducible representation of $\C^n$, however, is not a member of the decomposition $\oplus_{t=1}^l X_{t} = U$.  In that case, the vector $\dmb{v}=\sum_{s=1}^k \dmb{w}_s$, which is equal to $\rho(1_{\dmf{G}})(\dmb{v})$ and thus belongs   to $\spa(\dmf{G}\dmb{v})$, does not belong to $U$, that is a contradiction.\vspace{2mm}

We finally show that $\it 3)\Rightarrow 1)$ by a contradiction. To hold the desire, we assume, without losing any generality, that $W_1$ and $W_2$ are isomorphic. Let $\phi$ be an isomorphism between the two irreducible representations $W_1$ and $W_2$.

We now construct a vector $$\dmb{u}=\dmb{w}_1+\phi(\dmb{w}_1)+\sum_{s=3}^k \dmb{w}_s$$ where $\dmb{w}_s$ is a non-trivial vector in $W_s$ ($1\le s\le k$ and $s\neq 2$). According to Lemma \ref{LemIsomorphism}, we have $\spa(\dmf{G}[\dmb{w}_1+\phi(\dmb{w}_1)])\cong W_1$ and thus
$$\spa(\dmf{G}[\dmb{w}_1+\phi(\dmb{w}_1)])\bigoplus
\spa\left(\dmf{G}\left[\sum_{s=3}^k \dmb{w}_s\right]\right)\subsetneq\C^n.$$
By means of a plain fact that
$$\spa(\dmf{G}\dmb{u})\subseteq\spa(\dmf{G}[\dmb{w}_1+\phi(\dmb{w}_1)])\bigoplus
\spa\left(\dmf{G}\left[\sum_{s=3}^k \dmb{w}_s\right]\right),$$
we therefore obtain that
$$\spa(\dmf{G}\dmb{u})\subsetneq\C^n,$$
which contradicts the statement {\it 3)}.
\end{proof}

We now turn to the harder case that in a decomposition of $\C^n$ into irreducible representations of $\rho$, there are some of irreducible representations which are isomorphic to one another. As shown in Lemma \ref{LemIsomorphism}, the way of decomposing $\C^n$ into irreducible representations is not unique in this case. But there is a canonical way of organizing those irreducible representations.

Let $W_1,\ldots,W_h$ be the all distinct irreducible representations of $\rho$ in $\C^n$ such that any two representations in that group are not isomorphic. The decomposition  $\C^n=\oplus_{i=1}^h V_i$ into subrepresentations of $\rho$ is said to be {\em canonical decomposition of $\rho$} if $V_i=\oplus_{j=1}^{m_i} W_{ij}$ ($i=1,\ldots,h$) is a decomposition into irreducible representations of $\rho$ such that each irreducible representation $W_{ij}$ $(j=1,\ldots,m_i)$ is isomorphic to $W_i$.

Since any two irreducible representations $W_i$ and $W_{i'}$ are not isomorphic if $i\neq i'$, one can readily see, by virtue of the approach in proving Theorem \ref{Thm-1stCaseForEigenvector}, that the decomposition $\oplus_{i=1}^h V_i=\C^n$ is unique. As a result, $\drm{span}(\dmf{G}(\sum_{i=1}^h \dmb{v}_i)) = \C^n$ provided that $\drm{span}(\dmf{G} \dmb{v}_i) = V_i$ where the vector $\dmb{v}_i$ belongs to $V_i$ ($i=1,\ldots,h$). The decomposition $\oplus_{j=1}^{m_i} W_{ij}$ of $V_i$, however, is not unique, as shown in Lemma \ref{LemIsomorphism}. Consequently, to hold the desire that $\drm{span}(\dmf{G}(\sum_{j=1}^{m_i} \dmb{w}_{ij})) = V_i$, the group of vectors $\dmb{w}_{i1},\ldots,\dmb{w}_{im_i}$ need to comply with a further requirement, where the vector $\dmb{w}_{ij}$ belongs to $W_{ij}$ ($j=1,\ldots,m_i$).  We first present a lemma below for introducing the key requirement.

Let $U_i$ ($i=1,2$) be a subspace of $\C^n$, and let $\rho_i:\dmf{G}\rightarrow \GL(U_i)$ be a linear representation. We define a vector space
$$\Hom_\dmf{G}(U_1,U_2)=\{\phi:U_1\rightarrow U_2\mid \phi\mbox{ is a linear map and }
\phi\rho_1(\sig)=\rho_2(\sig)\phi, \forall\sig\in \dmf{G}\}.$$

Let $W$ be a subspace of $\C^n$ such that the linear representation $\rho_{_W}:\dmf{G}\rightarrow \GL(W)$ is isomorphic to some irreducible representation $W_i$ of $\rho$. Then $W$ is an irreducible representation, and thus, for each $\phi\in\Hom_\dmf{G}(W,V_i)$, $\phi$ vanishes or is injective.

\begin{lem}\label{Lem-Dimension} For any fixed decomposition $V_i=\oplus_{j=1}^{m_i}W_{ij}$ into irreducible representations isomorphic to $W_i$, if the maps $\phi_1,\ldots,\phi_{m_i}$ in $\Hom_\dmf{G}(W,V_i)$ satisfy that $\mbox{\rm Im }\phi_j=W_{ij}$ $(j=1,\ldots,m_i)$, then $\phi_1,\ldots,\phi_{m_i}$ form a basis of $\Hom_\dmf{G}(W,V_i)$.
\end{lem}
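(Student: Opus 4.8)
The plan is to decompose $\Hom_\dmf{G}(W,V_i)$ along the fixed direct sum $V_i=\oplus_{j=1}^{m_i}W_{ij}$ and then apply Schur's Lemma. First I would observe that, since each $W_{ij}$ is a subrepresentation and $\oplus_{l\neq j}W_{il}$ is a complementary subrepresentation, the projection $p_{ij}\colon V_i\to W_{ij}$ satisfies $p_{ij}\rho(\sig)=\rho(\sig)p_{ij}$ for all $\sig\in\dmf{G}$; consequently $p_{ij}\circ\phi\in\Hom_\dmf{G}(W,W_{ij})$ for every $\phi\in\Hom_\dmf{G}(W,V_i)$, and $\phi=\sum_{j=1}^{m_i}p_{ij}\circ\phi$ because $\sum_{j=1}^{m_i}p_{ij}=\mathrm{id}_{V_i}$.

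Next I would pin down each summand $\Hom_\dmf{G}(W,W_{ij})$. As $W$ is irreducible and $\mathrm{Im}\,\phi_j=W_{ij}\neq\{0\}$, the kernel of $\phi_j$ is a proper subrepresentation of $W$, so $\phi_j$ is injective, hence an isomorphism of representations of $W$ onto $W_{ij}$. By Schur's Lemma over $\C$ this forces $\dimn\Hom_\dmf{G}(W,W_{ij})=1$, so $\Hom_\dmf{G}(W,W_{ij})$ is spanned by $\phi_j$ (viewed as a map onto $W_{ij}$, equivalently by $\phi_j$ composed with the inclusion $W_{ij}\hookrightarrow V_i$). Combining with the first step, for an arbitrary $\phi\in\Hom_\dmf{G}(W,V_i)$ we get $p_{ij}\circ\phi=c_j\phi_j$ for some $c_j\in\C$, whence $\phi=\sum_{j=1}^{m_i}c_j\phi_j$; thus $\phi_1,\ldots,\phi_{m_i}$ span $\Hom_\dmf{G}(W,V_i)$. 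For linear independence, I would suppose $\sum_{j=1}^{m_i}c_j\phi_j=0$ and fix $k$; applying $p_{ik}$ and using that $p_{ik}$ annihilates $W_{ij}=\mathrm{Im}\,\phi_j$ for $j\neq k$ while restricting to the identity on $W_{ik}$ yields $c_k\phi_k=0$, and since $\phi_k\neq0$ we conclude $c_k=0$. As $k$ was arbitrary, $\phi_1,\ldots,\phi_{m_i}$ are linearly independent, and together with the spanning property they form a basis of $\Hom_\dmf{G}(W,V_i)$.

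The argument is essentially routine once framed in this way; the only genuinely essential external input is Schur's Lemma in its algebraically-closed-field form, which is exactly what delivers $\dimn\Hom_\dmf{G}(W,W_{ij})=1$ and would fail over $\R$. The one point that needs a little care is the $\dmf{G}$-equivariance of the projections $p_{ij}$: it relies on $V_i=\oplus_{j=1}^{m_i}W_{ij}$ being a decomposition into subrepresentations, so that the complement used to define $p_{ij}$ is itself $\dmf{G}$-invariant; this is also the place where the hypothesis that all $W_{ij}$ are isomorphic to $W_i$ (hence to $W$) is used, through Schur's Lemma applied to $\phi_j$.
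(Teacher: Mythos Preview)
Your proof is correct and follows essentially the same route as the paper: decompose $\phi\in\Hom_\dmf{G}(W,V_i)$ via the $\dmf{G}$-equivariant projections onto the $W_{ij}$, then invoke Schur's Lemma to identify each piece as a scalar multiple of $\phi_j$. The only minor difference is that the paper obtains its equivariant projections by group-averaging (as in the proof of Theorem~\ref{Thm-1stCaseForEigenvector}), whereas you observe more directly that the natural projections along a direct sum of subrepresentations are already $\dmf{G}$-equivariant; your linear-independence argument via projections is likewise a slight streamlining of the paper's image-based argument.
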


\begin{proof}[\bf Proof]
We first show that $\phi_1,\ldots,\phi_{m_i}$ are linear independent. If it is not the case, then there exist not all zeros $k_1,\ldots,k_{m_i}$ in $\C$ such that $\sum_{j=1}^{m_i}k_j\phi_j=0$. Without losing any generality, we suppose $k_1\neq 0$, and thus obtain $\phi_1=-\sum_{j=2}^{m_i}k_1^{-1}k_j\phi_j$ which contradicts the requirement that $\mbox{Im }\phi_1=W_{i1}$.

We now show that for each $\phi\in\Hom_\dmf{G}(W,V_i)$, one can find out some complex numbers $\lambda_1,\ldots,\lambda_{m_i}$ so that $\phi=\sum_{j=1}^{m_i}\lambda_j\phi_j$. As we have seen in the proof of Lemma \ref{LemIsomorphism}, one can construct a projection $p_j^0$ of $V_i$ onto $W_{ij}$ ($1\le j\le m_i$) such that $\rho(\sig)p_j^0=p_j^0\rho(\sig)$ for any $\sig\in \dmf{G}$. Evidently, $$\phi=\sum_{j=1}^{m_i}p_j^0\phi.$$
Let $\psi_j:W\rightarrow W_{ij}$ be a linear map such that $\psi_j(\dmb{w})=\phi_j(\dmb{w})$ ($1\le j\le m_i$) for every $\dmb{w}\in W$. Since $\phi_j$ is an injection, $\psi_j$ is an bijection and thus $\psi_j^{-1}p_j^0\phi:W\rightarrow W$ ($1\le j\le m_i$). Accordingly, one can readily see that $$\rho(\sig)\psi_j^{-1}p_j^0\phi\rho(\sig^{-1})=\psi_j^{-1}p_j^0\phi
\mbox{ for any }\sig\in \dmf{G},$$
so there exists some $\lambda_j\in\C$ such that $\psi_j^{-1}p_j^0\phi=\lambda_jI$ ($1\le j\le m_i$) by Schur's lemma. Consequently, by the definition of $\psi_j$, we have $$p_j^0\phi=\lambda_j\psi_j=\lambda_j\phi_j, 1\le j\le m_i.$$
Therefore, $\phi=\sum_{j=1}^{m_i}\lambda_j\phi_j$.
\end{proof}

\begin{cor}\label{Cor-Dimension} $\dimn \Hom_\dmf{G}(W,V_i)=\dimn V_i/\dimn W$.
\end{cor}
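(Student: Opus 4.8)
The plan is to read this off directly from Lemma \ref{Lem-Dimension}, which already does the substantive work; the only task left is to exhibit the hypothesis of that lemma can always be met, and then to compute the two dimensions appearing on the right-hand side. So I would proceed as follows.

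First I would fix a decomposition $V_i=\oplus_{j=1}^{m_i}W_{ij}$ into irreducible subrepresentations each isomorphic to $W_i$, which exists by the definition of the canonical decomposition. For each $j$ with $1\le j\le m_i$, since $W$ is isomorphic as a representation to $W_i$ and $W_i$ is isomorphic to $W_{ij}$, transitivity of isomorphism gives a representation isomorphism $\psi_j:W\rightarrow W_{ij}$; composing $\psi_j$ with the inclusion $W_{ij}\hookrightarrow V_i$ (which is $\dmf{G}$-equivariant because $V_i=\oplus_{j=1}^{m_i}W_{ij}$ is a direct sum of subrepresentations) produces a map $\phi_j\in\Hom_\dmf{G}(W,V_i)$ with $\mbox{Im }\phi_j=W_{ij}$. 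Thus the hypothesis of Lemma \ref{Lem-Dimension} is satisfied, and that lemma tells us $\phi_1,\ldots,\phi_{m_i}$ form a basis of $\Hom_\dmf{G}(W,V_i)$, whence $\dimn\Hom_\dmf{G}(W,V_i)=m_i$.

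It remains to identify $m_i$ with $\dimn V_i/\dimn W$. Since each $W_{ij}$ is isomorphic to $W_i$ and $W$ is isomorphic to $W_i$, all of these spaces share a common dimension, so $\dimn W_{ij}=\dimn W$ for every $j$; summing over the direct-sum decomposition of $V_i$ gives $\dimn V_i=\sum_{j=1}^{m_i}\dimn W_{ij}=m_i\dimn W$. Dividing by $\dimn W$ (which is positive, as $W$ is non-trivial) yields $m_i=\dimn V_i/\dimn W$, and combining with the previous paragraph completes the proof.

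I do not expect any genuine obstacle here: the corollary is a bookkeeping consequence of Lemma \ref{Lem-Dimension}. The only point requiring a word of care is the construction of the $\phi_j$ — specifically noting that an abstract isomorphism of representations $W\cong W_{ij}$ can be promoted, via the inclusion $W_{ij}\hookrightarrow V_i$, to an honest element of $\Hom_\dmf{G}(W,V_i)$ whose image is exactly $W_{ij}$ — and the trivial observation that isomorphic representations have equal dimension, which is what converts the count $m_i$ into the stated ratio.
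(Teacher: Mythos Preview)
Your proposal is correct and matches the paper's intent: the corollary is stated without proof immediately after Lemma \ref{Lem-Dimension}, so the paper is treating it as the obvious consequence you spell out---Lemma \ref{Lem-Dimension} furnishes a basis of $\Hom_\dmf{G}(W,V_i)$ with $m_i$ elements, and $m_i=\dimn V_i/\dimn W$ since each $W_{ij}\cong W$. Your explicit construction of the $\phi_j$ and the dimension count are exactly the details the paper leaves to the reader.
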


Let $\C^n=\oplus_{i=1}^hV_i$ be the canonical decomposition of the representation $\rho$, and let $V_i=\oplus_{j=1}^{m_i}W_{ij}$ be a decomposition into irreducible representations isomorphic to $W_i$. A group of vectors $\dmb{w}_1,\ldots,\dmb{w}_{m_i}$ is said to be {\em an independent group} if the vectors $\psi_{1m_i}(\dmb{w}_1),\psi_{2m_i}(\dmb{w}_2),$ $\ldots,$ $\psi_{m_{i}-1m_i}(\dmb{w}_{m_{i}-1}), \dmb{w}_{m_i}$ are linear independent where $\dmb{w}_j$ is a non-trivial vector in $W_{ij}$ ($j=1,\ldots,m_i$) and $\psi_{xy}\in\Hom_\dmf{G}(W_{ix},W_{iy})$. By means of Corollary \ref{Cor-Dimension}, $\dimn \Hom_\dmf{G}(W_{ix},W_{iy})$ $=$ $1$ $(1\le x,y\le m_i)$ and thus the independent group is well defined. One can easily see that if  $\psi_{1m_i}(\dmb{w}_1),\ldots,\psi_{m_{i}-1m_i}(\dmb{w}_{m_{i}-1}), \dmb{w}_{m_i}$ are linear  independent, then each linear map $\psi_{jm_i}$ is bijective ($j=1,\ldots,m_i$).

\begin{thm}\label{Thm-2ndCaseForEigenvector} Let $V_i$ be one of terms in the canonical decomposition of $\rho$. Then the three statements below are equivalent.

\begin{enumerate}

\item[1)] There exists a vector $\dmb{v}=\sum_{j=1}^{m_i}\dmb{w}_j$ in $V_i$ enjoying that $\spa(\dmf{G}\dmb{v})=V_i$ where $\dmb{w}_j$ is a non-trivial vector in $W_{ij}$ $(j=1,\ldots,m_i)$.

\item[2)] The group of vectors $\dmb{w}_1,\ldots,\dmb{w}_{m_i}$ is an independent group.

\item[3)] $(\dimn W_i)^2\ge \dimn V_i$, which is equivalent to that $\dimn W_i \ge m_i$.

\end{enumerate}
\end{thm}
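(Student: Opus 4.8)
The plan is to prove the chain $1)\Rightarrow 2)\Rightarrow 3)\Rightarrow 1)$, exploiting the isomorphism $\spa(\dmf{G}[\dmb{w}_1+\phi(\dmb{w}_1)])\cong W_1$ of Lemma \ref{LemIsomorphism} and the one-dimensionality of each $\Hom_\dmf{G}(W_{ix},W_{iy})$ supplied by Corollary \ref{Cor-Dimension}. Throughout I work inside the single isotypic component $V_i=\oplus_{j=1}^{m_i}W_{ij}$, fix the distinguished summand $W=W_{im_i}$, and use the fixed generators $\psi_{jm_i}\in\Hom_\dmf{G}(W_{ij},W)$ to transport every $\dmb{w}_j$ into $W$; since these $\Hom$-spaces are one-dimensional, the $\psi_{jm_i}$ are unique up to scalar and ``independent group'' is a well-posed notion, as already remarked in the text.

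\textbf{$1)\Rightarrow 2)$.} Suppose $\dmb{w}_1,\ldots,\dmb{w}_{m_i}$ is \emph{not} an independent group, so the transported vectors $\psi_{1m_i}(\dmb{w}_1),\ldots,\psi_{m_i-1\,m_i}(\dmb{w}_{m_i-1}),\dmb{w}_{m_i}$ satisfy a nontrivial linear relation. I would then show that $\spa(\dmf{G}\dmb{v})$ is contained in a proper $\dmf{G}$-invariant subspace of $V_i$. Concretely: a linear dependence among the transported vectors produces a $\dmf{G}$-equivariant map $V_i\to W$ whose kernel contains $\dmb{v}$ (bundle the $\psi_{jm_i}$, with coefficients from the dependence, into one map $V_i\to W$ via the projections $p_j^0$ onto $W_{ij}$ from the proof of Lemma \ref{LemIsomorphism}); this kernel is $\dmf{G}$-invariant, proper (the map is nonzero, since at least one coefficient in the relation is nonzero and the corresponding $\psi_{jm_i}$ is injective), and contains the whole orbit $\dmf{G}\dmb{v}$. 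Hence $\spa(\dmf{G}\dmb{v})\subsetneq V_i$, contradicting $1)$. Taking contrapositives gives $1)\Rightarrow 2)$.

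\textbf{$2)\Rightarrow 1)$.} Conversely, if the group is independent, then each $\psi_{jm_i}$ is bijective (as noted after the definition), so an induction on $m_i$ reduces to the two-summand case, which is exactly Lemma \ref{LemIsomorphism}: with $\phi=\psi_{jm_i}^{-1}$ realizing the isomorphism, $\spa(\dmf{G}[\dmb{w}_j+\dmb{w}_{m_i}])\cong W_i$ when the two pieces are ``matched'' by the equivariant isomorphism, and one builds up $\spa(\dmf{G}\dmb{v})$ summand by summand, checking at each stage that the newly added $\dmb{w}_j$, being independent of the transported partial sum, enlarges the span by a full copy of $W_i$; after $m_i$ steps the span has dimension $m_i\dimn W_i=\dimn V_i$, forcing $\spa(\dmf{G}\dmb{v})=V_i$.

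\textbf{$2)\Leftrightarrow 3)$.} For this I would observe that $\dmb{w}\in W_i$ generates, under $\dmf{G}$, a subspace of dimension at most $\dimn W_i$ (it is a quotient of the regular-type span but lives in the $\dimn W_i$-dimensional $W_i$); more to the point, the transported vectors $\psi_{1m_i}(\dmb{w}_1),\ldots,\dmb{w}_{m_i}$ are $m_i$ vectors lying in the single space $W\cong W_i$ of dimension $\dimn W_i$, so they can be linearly independent only if $\dimn W_i\ge m_i$, i.e. $(\dimn W_i)^2\ge m_i\dimn W_i=\dimn V_i$. Conversely, when $\dimn W_i\ge m_i$ one can choose the $\dmb{w}_j$ so that the transported vectors are in general position in $W_i$, producing an independent group; since $\spa(\dmf{G}\dmb{v})=V_i$ in that case must have dimension $\dimn V_i\le(\dimn W_i)^2$, the bound is also necessary, and the equivalence with $\dimn W_i\ge m_i$ follows from $\dimn V_i=m_i\dimn W_i$.

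\textbf{Main obstacle.} The delicate point is the bookkeeping in $2)\Rightarrow 1)$: proving that adding an independent $\dmb{w}_j$ genuinely adds a \emph{fresh} copy of $W_i$ to the span rather than re-spanning part of what is already there. This requires Schur's lemma to pin down that any $\dmf{G}$-equivariant map from the partial-sum span (which is $\cong W_i$) into $V_i$ is determined by one scalar on each isotypic slot, so that the ``new direction'' $\psi_{jm_i}(\dmb{w}_j)$ being independent in $W$ is exactly what prevents collapse. Packaging this cleanly — ideally by phrasing the whole orbit span via the map $(\lambda_1,\ldots,\lambda_{m_i})\mapsto\sum\lambda_j\psi_{jm_i}^{-1}$ and invoking Lemma \ref{Lem-Dimension} to identify its image — is where the real work lies; the rest is the routine equivariance checks already rehearsed in Lemmas \ref{LemIsomorphism} and \ref{Lem-Dimension}.
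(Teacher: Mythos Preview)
Your kernel argument for $1)\Rightarrow 2)$ is correct and is in fact cleaner than what the paper does. The paper assumes a dependence $\psi_{l+1\,m_i}(\dmb{w}_{l+1})=\sum_{j\le l}k_j\psi_{jm_i}(\dmb{w}_j)$, rewrites $\dmb{v}$ as $\sum_{j\le l}\big(\dmb{w}_j+k_j\psi_{l+1\,m_i}^{-1}\psi_{jm_i}(\dmb{w}_j)\big)+\sum_{j\ge l+2}\dmb{w}_j$, and then applies Lemma~\ref{LemIsomorphism} term by term to bound the span inside a sum of $m_i-1$ copies of $W_i$. Your device of bundling the dependence coefficients into a single nonzero $\dmf{G}$-map $\Phi=\sum_j c_j\,\psi_{jm_i}\circ p_j^0:V_i\to W$ with $\dmb{v}\in\ker\Phi$ reaches the same conclusion in one stroke. (Minor quibble: the projections $p_j^0$ are actually built in the proof of Theorem~\ref{Thm-1stCaseForEigenvector}, not Lemma~\ref{LemIsomorphism}; the paper itself mis-cites this in the proof of Lemma~\ref{Lem-Dimension}.)

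Your $2)\Leftrightarrow 3)$ is essentially the paper's argument.

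The gap is in your $2)\Rightarrow 1)$. You write that ``an induction on $m_i$ reduces to the two-summand case, which is exactly Lemma~\ref{LemIsomorphism}.'' But Lemma~\ref{LemIsomorphism} says the \emph{opposite} of what you need: it shows that when the two pieces \emph{are} matched by an equivariant isomorphism, the span \emph{collapses} to a single copy of $W_i$. For the base case $m_i=2$ of your induction you need the converse---that when $\psi_{12}(\dmb{w}_1)$ and $\dmb{w}_2$ are independent the span is all of $W_{i1}\oplus W_{i2}$---and Lemma~\ref{LemIsomorphism} does not supply this. More seriously, in the inductive step the span $\spa(\dmf{G}\dmb{v})$ is not built from $\spa\big(\dmf{G}\sum_{j<m_i}\dmb{w}_j\big)$ by ``adding a copy''; there is no direct relation between the orbit of a sum and the orbits of partial sums, so the induction as you describe it does not get off the ground.

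To your credit, your ``Main obstacle'' paragraph correctly names Lemma~\ref{Lem-Dimension} as the real engine, and that is exactly how the paper proceeds---but not inductively. The paper argues $2)\Rightarrow 1)$ by contradiction: assume $\spa(\dmf{G}\dmb{v})=\oplus_{s=1}^t X_s$ with $t<m_i$, pick $\vp_s\in\Hom_\dmf{G}(W,V_i)$ with $\mathrm{Im}\,\vp_s=X_s$, expand each $\vp_s$ in the basis $\phi_1,\ldots,\phi_{m_i}$ of Lemma~\ref{Lem-Dimension}, and solve $\sum_j\dmb{w}_j=\sum_s\vp_s(\dmb{z}_s)$ for $\dmb{z}_s\in W$. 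Comparing components in the direct sum $\oplus_j W_{ij}$ yields $\psi_{m_ij}^{-1}(\dmb{w}_j)=\sum_{s=1}^t k_{sj}\dmb{z}_s$, so the $m_i$ transported vectors lie in the span of only $t<m_i$ vectors, contradicting independence. If you want to salvage the inductive route, the step that actually works is: if $\spa(\dmf{G}\dmb{v})\subsetneq V_i$ then (using the inductive hypothesis on $V_i'=\oplus_{j<m_i}W_{ij}$) the equivariant projection $\spa(\dmf{G}\dmb{v})\to V_i'$ is a bijection, so $\spa(\dmf{G}\dmb{v})$ is the graph of some $\Psi\in\Hom_\dmf{G}(V_i',W_{im_i})$, whence $\dmb{w}_{m_i}=\Psi(\sum_{j<m_i}\dmb{w}_j)=\sum_{j<m_i}c_j\psi_{jm_i}(\dmb{w}_j)$ by Schur---again contradicting independence. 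Either way, Lemma~\ref{LemIsomorphism} is not the tool; Lemma~\ref{Lem-Dimension} (equivalently Schur's lemma on the $\Hom$-spaces) is.
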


\begin{remark} In the case that $(\dimn W_i)^2<\dimn V_i$,
$\dimn \spa(\dmf{G}\dmb{u})\le(\dimn W_i)^2$ for any vector $\dmb{u}$ in $V_i$ and one can always find a vector $\dmb{v}$ with $\dimn \spa(\dmf{G}\dmb{v})=(\dimn W_i)^2$ according to the proof of our theorem.
\end{remark}

\begin{proof}[\bf Proof]

We first show the equivalence between the first two statements. To prove {\it 1)$\Rightarrow$2)}, we assume for a contradiction that $\psi_{1m_i}(\dmb{w}_1),\ldots,\psi_{lm_i}(\dmb{w}_l)$ $(l\le m_i-1)$ are linear independent, but $\psi_{1m_i}(\dmb{w}_1),\ldots,\psi_{lm_i}(\dmb{w}_l),\psi_{l+1m_i}(\dmb{w}_{l+1})$ are linear dependent where $\psi_{xy}\in\Hom_\dmf{G}(W_{ix},W_{iy})$. Consequently, there exist not all zeros $k_1,\ldots,k_l$ in $\C$ so that
$$\psi_{l+1m_i}(\dmb{w}_{l+1})=\sum_{j=1}^l k_j\psi_{jm_i}(\dmb{w}_j)$$
and thus
$$\dmb{w}_{l+1}=\sum_{j=1}^l k_j\psi_{l+1m_i}^{-1}\psi_{jm_i}(\dmb{w}_j).$$
Therefore,
$$\dmb{v}=\sum_{j=1}^{m_i}\dmb{w}_j=\sum_{j=1}^{l}\big(\dmb{w}_j+k_j\psi_{l+1m_i}^{-1}\psi_{jm_i}(\dmb{w}_j)\big)
+\sum_{j=l+2}^{m_i}\dmb{w}_j.$$
Apparently, $\psi_{l+1m_i}^{-1}\psi_{jm_i}\in\Hom_\dmf{G}(W_{ij},W_{il+1})$. By means of Lemma \ref{LemIsomorphism}, the two representations $\spa(\dmf{G}[\dmb{w}_j+k_j\psi_{l+1m_i}^{-1}\psi_{jm_i}(\dmb{w}_j)])$ and $W_i$ are isomorphic, $1\le j\le l$. Consequently,
$$\bigoplus_{j=1}^l\spa(\dmf{G}[\dmb{w}_j+k_j\psi_{l+1m_i}^{-1}\psi_{jm_i}(\dmb{w}_j)])
\bigoplus_{j=l+2}^{m_i}\spa(\dmf{G} \dmb{w}_j)\cong V_i\setminus W_{il+1}.$$
According a plain fact that
$$\spa(\dmf{G} \dmb{v})\subseteq\bigoplus_{j=1}^l\spa(\dmf{G}[\dmb{w}_j+k_j\psi_{l+1m_i}^{-1}\psi_{jm_i}(\dmb{w}_j)])
\bigoplus_{j=l+2}^{m_i}\spa(\dmf{G} \dmb{w}_j),$$
we have
$\spa(\dmf{G} \dmb{v})\subsetneq V_i$ which contradicts the assumption enjoyed by the vector $\dmb{v}$.

Next, we show that {\it 2)$\Rightarrow$1)} by a contradiction and assume that $\spa(\dmf{G} \dmb{v})\subsetneq V_i$. According to the uniqueness of the canonical decomposition (see $\S 2.6$ in \cite{Serre}, for example), $\spa(\dmf{G} \dmb{v})$ has a decomposition $\spa(\dmf{G} \dmb{v})=\oplus_{s=1}^t X_s$ into irreducible representations such that $X_s\cong W_i$, $1\le s\le t$. By our assumption above, we have
\begin{equation}\label{Equ-Contradiction} t<m_i.\end{equation}

Let $\phi_j$ be a map in $\Hom_\dmf{G}(W_{im_i},V_i)$ such that $\mbox{Im }\phi_j=W_{ij}$ $(1\le j\le m_i)$, and let $\vp_s$ be a map in $\Hom_\dmf{G}(W_{im_i},V_i)$ such that $\mbox{Im }\vp_s=X_{s}$ $(1\le s\le t)$. Accordingly, $\phi_j$ and $\vp_s$ are injective, $1\le j\le m_i$ and $1\le s\le t$. Furthermore, it would be apparent on a moment consideration that for each $\phi_j$ $(1\le j\le m_i)$, there exists  $\psi_{m_ij}\in\Hom_\dmf{G}(W_{im_i},W_{ij})$ such that $\phi_j(\dmb{w})=\psi_{m_ij}(\dmb{w})$ for any $\dmb{w}\in W_{im_i}$.

Applying Lemma \ref{Lem-Dimension}, $\phi_1,\ldots,\phi_{m_i}$ form a basis of $\Hom_\dmf{G}(W_{im_i},V_i)$ and therefore for each $\vp_s$ $(1\le s\le t)$, there exist not all zeros $k_{s1},\ldots,k_{sm_i}$ in $\C$  such that
\begin{equation}\label{Equ-PhiBasis} \vp_s=\sum_{j=1}^{m_i} k_{sj}\phi_j.\end{equation}
Evidently, $\dmb{v}=\sum_{j=1}^{m_i} \dmb{w}_j$ belongs to $\spa(\dmf{G} \dmb{v})$. Since $\spa(\dmf{G} \dmb{v})=\oplus_{s=1}^t X_s$, one can pick a vector $\dmb{x}_s$ in $X_s$ $(1\le s\le t)$ so that
\begin{equation}\label{Equ-Coordinate} \sum_{j=1}^{m_i} \dmb{w}_j=\sum_{s=1}^t \dmb{x}_s. \end{equation}
Let $\dmb{z}_s$ be the vector in $W_{m_i}$ such that $\vp_s(\dmb{z}_s)=\dmb{x}_s$ $(1\le s\le t)$. By means of equations \eqref{Equ-PhiBasis} and \eqref{Equ-Coordinate}, we have
\begin{align*}
\sum_{j=1}^{m_i} \dmb{w}_j &= \sum_{s=1}^t \vp_s(\dmb{z}_s)\\
&=\sum_{s=1}^t\left(\sum_{j=1}^{m_i} k_{sj}\phi_j\right)(\dmb{z}_s)\\
&=\sum_{j=1}^{m_i}\phi_j\left(\sum_{s=1}^t k_{sj}\dmb{z}_s\right)\\
&=\sum_{j=1}^{m_i}\psi_{m_ij}\left(\sum_{s=1}^t k_{sj}\dmb{z}_s\right).
\end{align*}
Consequently,
$$\dmb{w}_j=\psi_{m_ij}\left(\sum_{s=1}^t k_{sj}\dmb{z}_s\right), j=1,2,\ldots,m_i.$$
Noting the fact that $\psi_{m_ij}$ is bijective, we have
$$\psi_{m_ij}^{-1}(\dmb{w}_j)=\sum_{s=1}^t k_{sj}\dmb{z}_s, j=1,2,\ldots,m_i.$$
Hence, the group of vectors $\psi_{m_i1}^{-1}(\dmb{w}_1),\ldots,\psi_{m_im_i-1}^{-1}(\dmb{w}_{m_i-1}), \psi_{m_im_i}^{-1}(\dmb{w}_{m_i})=\lambda_{m_i}\dmb{w}_{m_i}$ in $W_{m_i}$ could be expressed linearly by the group $\dmb{z}_1,\ldots,\dmb{z}_t$ in $W_{m_i}$. However, the fact that $\dmb{w}_1,\ldots,\dmb{w}_{m_i}$ are independent contradicts the claim \eqref{Equ-Contradiction}, {\it i.e.,} $t<m_i$, which is a conclusion of our assumption that $\spa(\dmf{G} \dmb{v})\subsetneq V_i$.

\vspace{2mm}

We now show the equivalence between the last two statements. According to the definition, if the group of vectors $\dmb{w}_1,\ldots,\dmb{w}_{m_i}$ is an independent group then $\psi_{1m_i}(\dmb{w}_1),\ldots,$ $\psi_{m_{i}-1m_i}(\dmb{w}_{m_{i}-1}),$ $\dmb{w}_{m_i}$ are linear independent where $\psi_{xy}\in\Hom_\dmf{G}(W_{ix},W_{iy})$. Evidently,  $\psi_{1m_i}(\dmb{w}_1),\ldots,\psi_{m_{i}-1m_i}(\dmb{w}_{m_{i}-1}),$ $\dmb{w}_{m_i}$ are in the subspace $W_{m_i}$, and thus $$\dimn W_{m_i}\ge m_i=\dimn V_i/\dimn W_{m_i}.$$ Since $W_{m_i}\cong W_i$, $(\dimn W_i)^2\ge \dimn V_i$.

On the other hand, one can readily show that \textit{3)$\Rightarrow$2)} by induction. To be precise, we shall show by induction that if $\dimn W_{i}\ge\dimn V_i/\dimn W_{i}$ then there exists an independent group $V_i$.

Clearly, the assertion holds for the trivial case that $\dimn V_i/\dimn W_i=1$. In fact, every nonzero vector in $W_{i1}$ could be an independent group in that case.

We assume that our assertion holds in the case that $\dimn V_i/\dimn W_i=m$. For the case that $\dimn V_i / \dimn W_i = m+1$, one can first pick an independent group $\dmb{w}_1,\ldots,\dmb{w}_{m_i-1}$  in $V_i\setminus W_{m_i}$ by the induction hypothesis, where $\dmb{w}_j\in W_{ij}$ $(1\le j\le m_i-1)$.
Hence, $$\psi_{1m_i-1}(\dmb{w}_1),\ldots,\psi_{m_{i}-2m_i-1}(\dmb{w}_{m_{i}-2}), \dmb{w}_{m_i-1}\mbox{ are linear independent,}$$
where $\psi_{xy}\in\Hom_\dmf{G}(W_{ix},W_{iy})$. Let $\psi_{m_i-1m_i}$ be a bijection in $\Hom_\dmf{G}(W_{im_i-1},W_{im_i})$. Then one can obtain a linear independent group of vectors
$$\psi_{m_i-1m_i}\psi_{1m_i-1}(\dmb{w}_1),\ldots,
\psi_{m_i-1m_i}\psi_{m_{i}-2m_i-1}(\dmb{w}_{m_{i}-2}), \psi_{m_i-1m_i}(\dmb{w}_{m_i-1}).$$ Evidently, $\psi_{m_i-1m_i}\psi_{jm_i-1}$ is a bijection in $\Hom_\dmf{G}(W_{ij},W_{im_i})$, $1\le j\le m_i-2$. By means of Corollary \ref{Cor-Dimension}, $\dimn \Hom_\dmf{G}(W_{ij},W_{im_i})=1$ and thus there exists a scalar $\lambda_j\neq0$ so that $\psi_{m_i-1m_i}\psi_{jm_i-1}=\lambda_j\psi_{jm_i}$, $1\le j\le m_i-2$. Consequently, $$\lambda_1\psi_{1m_i}(\dmb{w}_1),\ldots,
\lambda_{m_i-2}\psi_{m_i-2m_i}(\dmb{w}_{m_{i}-2}), \psi_{m_i-1m_i}(\dmb{w}_{m_i-1})\mbox{ are linear independent.}$$ Since $\dimn W_{i}\ge\dimn V_i/\dimn W_{i}$ and $W_i\cong W_{im_i}$, one can find out a vector $\dmb{w}_{m_i}\in \dmb{w}_{m_i}$ so that $$\psi_{1m_i}(\lambda_1\dmb{w}_1),\ldots,
\psi_{m_i-2m_i}(\lambda_{m_i-2}\dmb{w}_{m_{i}-2}), \psi_{m_i-1m_i}(\dmb{w}_{m_i-1}), \dmb{w}_{m_i}\mbox{ are linear independent.}$$
Evidently, $\lambda_1\dmb{w}_1,\ldots,\lambda_{m_i-2}\dmb{w}_{m_{i}-2}, \dmb{w}_{m_i-1}, \dmb{w}_{m_i}$ is an independent group in $V_i$.
\end{proof}

According to approaches to establish Theorem \ref{Thm-1stCaseForEigenvector} and Theorem \ref{Thm-2ndCaseForEigenvector}, the following is relevant.
\begin{cor}\label{Cor-Fundamental} Suppose that $W_1,\ldots,W_h$ are the all distinct irreducible representations of $\rho$ and $\C^n=\oplus_{i=1}^hV_i$ is the canonical decomposition of the representation $\rho$.
Then there exists a vector $\dmb{v}$ in $\C^n$ so that
$$\dimn \spa(\dmf{G} \dmb{v})=\sum_{i=1}^h\min\{\dimn V_i,(\dimn W_i)^2\}.$$
\end{cor}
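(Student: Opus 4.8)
The plan is to combine the two main theorems of the section by working one canonical block at a time and then gluing the blocks together. Recall that $\C^n=\oplus_{i=1}^h V_i$ is the canonical decomposition, and (as noted just after the definition of the canonical decomposition, using the argument of Theorem \ref{Thm-1stCaseForEigenvector}) this decomposition is unique and moreover $\spa(\dmf{G}(\sum_{i=1}^h \dmb{v}_i))=\oplus_{i=1}^h \spa(\dmf{G}\dmb{v}_i)$ whenever $\dmb{v}_i\in V_i$, because distinct $V_i$'s carry non-isomorphic isotypic components and hence a $\dmf{G}$-invariant subspace of $\C^n$ splits as the direct sum of its intersections with the $V_i$'s. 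So it suffices, for each $i$, to exhibit a vector $\dmb{v}_i\in V_i$ with $\dimn \spa(\dmf{G}\dmb{v}_i)=\min\{\dimn V_i,(\dimn W_i)^2\}$, and then take $\dmb{v}=\sum_{i=1}^h \dmb{v}_i$.

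For a fixed block $V_i=\oplus_{j=1}^{m_i} W_{ij}$ with each $W_{ij}\cong W_i$, there are two cases. First, if $(\dimn W_i)^2\ge \dimn V_i$, equivalently $\dimn W_i\ge m_i$, then by the implication \textit{3)$\Rightarrow$1)} of Theorem \ref{Thm-2ndCaseForEigenvector} there is a vector $\dmb{v}_i=\sum_{j=1}^{m_i}\dmb{w}_{ij}$ with $\spa(\dmf{G}\dmb{v}_i)=V_i$, so $\dimn\spa(\dmf{G}\dmb{v}_i)=\dimn V_i=\min\{\dimn V_i,(\dimn W_i)^2\}$. Second, if $(\dimn W_i)^2<\dimn V_i$, i.e.\ $\dimn W_i<m_i$, then I would invoke the assertion of the Remark following Theorem \ref{Thm-2ndCaseForEigenvector}: for any $\dmb{u}\in V_i$ one has $\dimn\spa(\dmf{G}\dmb{u})\le(\dimn W_i)^2$, and a vector $\dmb{v}_i$ attaining equality can be produced. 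Concretely, $\spa(\dmf{G}\dmb{u})$ is a $\dmf{G}$-invariant subspace of $V_i$, hence (by uniqueness of the canonical form) a direct sum of $t$ copies of $W_i$, so its dimension is $t\,\dimn W_i$; the number $t$ of copies is bounded by $\dimn W_i$ because, as in the proof of \textit{2)$\Rightarrow$1)}, the corresponding vectors $\psi_{m_ij}^{-1}(\dmb{w}_{ij})$ all lie in the single space $W_{im_i}$ of dimension $\dimn W_i$, forcing $t\le\dimn W_i$; and running the inductive construction in the proof of \textit{3)$\Rightarrow$2)} but stopping after $\dimn W_i$ of the $\dmb{w}_{ij}$'s (setting the remaining ones to be $\dmf{G}$-translates already in the span, or simply zero on the extra summands) yields a vector whose orbit span has exactly $\dimn W_i$ isotypic copies, i.e.\ dimension $(\dimn W_i)^2$.

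Having chosen $\dmb{v}_i\in V_i$ in each case so that $\dimn\spa(\dmf{G}\dmb{v}_i)=\min\{\dimn V_i,(\dimn W_i)^2\}$, the vector $\dmb{v}=\sum_{i=1}^h\dmb{v}_i$ satisfies
$$\dimn\spa(\dmf{G}\dmb{v})=\sum_{i=1}^h\dimn\spa(\dmf{G}\dmb{v}_i)=\sum_{i=1}^h\min\{\dimn V_i,(\dimn W_i)^2\},$$
by the block-orthogonality of the canonical decomposition recalled in the first paragraph. This completes the construction.

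The step I expect to be the main obstacle is the second case, namely making precise (without simply citing the Remark as a black box) that $(\dimn W_i)^2$ is genuinely the maximum of $\dimn\spa(\dmf{G}\dmb{u})$ over $\dmb{u}\in V_i$ when $m_i>\dimn W_i$, and that this maximum is attained. The upper bound is the cleaner half: it comes from the observation that $\spa(\dmf{G}\dmb{u})$ is isotypic of type $W_i$ with, say, $t$ copies, and that the argument of \textit{2)$\Rightarrow$1)} in Theorem \ref{Thm-2ndCaseForEigenvector} shows $t$ is at most the number of linearly independent vectors one can fit inside a single copy $W_{im_i}$, which is $\dimn W_i$. Attainment requires adapting the inductive construction from \textit{3)$\Rightarrow$2)}: one builds an ``independent group'' of the maximal possible size $\dimn W_i$ spread over the first $\dimn W_i$ summands $W_{i1},\dots,W_{i,\dimn W_i}$, takes $\dmb{v}_i$ to be the sum of those vectors (with zero components on the remaining summands, or equivalently restrict attention to the subrepresentation they generate), and checks — exactly as in Theorem \ref{Thm-2ndCaseForEigenvector} — that its orbit spans all $\dimn W_i$ of those copies, giving dimension $(\dimn W_i)^2$. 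Care is needed only in bookkeeping the maps $\psi_{xy}\in\Hom_\dmf{G}(W_{ix},W_{iy})$ and their one-dimensionality from Corollary \ref{Cor-Dimension}; no new idea beyond the proofs already given is required.
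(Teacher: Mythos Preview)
Your proposal is correct and follows precisely the route the paper indicates: the paper itself offers no detailed proof of this corollary beyond the remark that it follows ``according to approaches to establish Theorem~\ref{Thm-1stCaseForEigenvector} and Theorem~\ref{Thm-2ndCaseForEigenvector}'', and your argument is exactly a careful unpacking of that reference---handling each isotypic block $V_i$ via Theorem~\ref{Thm-2ndCaseForEigenvector} (and its Remark) and then gluing the blocks via the uniqueness of the canonical decomposition as in Theorem~\ref{Thm-1stCaseForEigenvector}. Your treatment of the second case, including both the upper bound $t\le\dimn W_i$ and the attainment via the inductive construction restricted to the first $\dimn W_i$ summands, is in fact more explicit than what the paper records.
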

\begin{remark} According to the remark of Theorem \ref{Thm-2ndCaseForEigenvector},
$\sum_{i=1}^h\min\{\dimn V_i,(\dimn W_i)^2\}$ is the largest number $\dimn \spa(\dmf{G} v)$ can attain for any $\dmb{v}\in\C^n$.
\end{remark}

Finally, we determine $\dimn \spa(\dmf{G} \dmb{v})$ for any given vector $\dmb{v}$ in $\C^n$.
Let $\C^n=\oplus_{i=1}^h V_i$ be the canonical decomposition of the representation $\rho$. For each $V_i$ $(i=1,\ldots,h)$, we now fix a decomposition $V_i=\oplus_{j=1}^{m_i} W_{ij}$ into irreducible representations such that each $W_{ij}$ is isomorphic to $W_i$. Apparently, for every vector $\dmb{v}$ in $\C^n$, one can find a unique vector $\dmb{v}_i$ in $V_i$ $(i=1,\ldots,h)$ so that $\dmb{v}=\sum_{i=1}^h \dmb{v}_i$, and furthermore, each $\dmb{v}_i$ can be expressed uniquely as $\sum_{j=1}^{m_i} \dmb{w}_{ij}$ where $\dmb{w}_{ij}\in W_{ij}$ $(j=1,\ldots,m_i)$.

We use $n_i$ to denote the cardinality of maximum independent group in the group $\dmb{w}_{i1},\ldots,\dmb{w}_{im_i}$ $(1\le i\le h)$. It is not difficult to see that $$n_i\le\min\{\dimn W_i,\dimn V_i/\dimn W_i\}.$$
By means of approaches in proving Theorem \ref{Thm-2ndCaseForEigenvector}, one can readily see that
$$\dimn \spa(\dmf{G} \dmb{v}_i)=n_i\cdot\dimn W_i~~(i=1,\ldots,h).$$
As a result, according to the proof of Theorem \ref{Thm-1stCaseForEigenvector}, we have
$$\dimn \spa(\dmf{G} \dmb{v})=\sum_{i=1}^h \dimn \spa(\dmf{G} \dmb{v}_i)=\sum_{i=1}^h n_i\cdot\dimn W_i.$$

\section{Petersen graph}


In the last section, we characterize theoretically those extremal eigenvectors of $\dbf{A}(G)$ for a graph $G$ through the permutation representation of $\drm{Aut}~G$. In this section, we take the Petersen graph $Pet$ (see Fig. \ref{Fig-1}) as an example and compute those extremal eigenvectors for $Pet$. As we shall see below, $Pet$ enjoys an amazing property that each eigenspace of the adjacency matrix $\dbf{A}$ is an irreducible representation of the permutation representation of $\drm{Aut}~ Pet$. It is well-known that for any finite group $\dmf{G}$, one can build a graph $G$ such that $\drm{Aut}~G = \dmf{G}$, so the fact enjoyed by $Pet$ raises a further question whether we can build a graph $G$  for any finite group $\dmf{G}$ such that $\drm{Aut}~G = \dmf{G}$ and each eigenspace of $\dbf{A}(G)$ is an irreducible representation of the permutation representation of $\dmf{G}$. It is a notorious problem to compute all irreducible representations of the permutation representation for a finite group (see \cite{LP} for details), so it would be interesting if one can efficiently construct such a graph enjoying the property above.\vspace{2mm}

As well-known, it is not easy to determine the automorphism group of $Pet$, so for completeness, we present here an elegant way to obtain $\drm{Aut}~Pet$.

First of all, we label the vertices of $Pet$ by 10 subsets of $[5]=\{1,2,3,4,5\}$, each of which contains exactly 2 distinct elements of $[5]$, such that $\{x,y\}$ and $\{u,v\}$ are adjacent if and only if $\{x,y\}$ and $\{u,v\}$ have no common elements,  as shown on the right in Fig. \ref{Fig-1}. As a result, each permutation $\sigma$ in $\dmf{S}_5$ induces a bijective map $\tau_{\sigma}$ from the vertex set of $Pet$ to itself, and furthermore, since $\sigma$ is a bijective map from $[5]$ to itself, $$\{\sigma(x),\sigma(y)\}\cap\{\sigma(u),\sigma(v)\}=\emptyset \Leftrightarrow \{x,y\}\cap\{u,v\}=\emptyset,$$
{\it i.e.,} the map $\tau_{\sigma}$ keeps the adjacency relation between vertices of $Pet$.
Therefore, each permutation $\sigma$ in $\dmf{S}_5$ induces an automorphism $\tau_{\sigma}$ of $Pet$ and thus $\dmf{S}_5$ is isomorphic to a subgroup of $\drm{Aut}~Pet$. Consequently, $|\drm{Aut}~Pet|\ge 120$.

We now show, by means of the Orbit-stabilizer Lemma, that  $|\drm{Aut}~Pet|$ is indeed equal to 120, and thus $\drm{Aut}~Pet$ is isomorphic to $\dmf{S}_5$. Let $\dmf{G}$ be the automorphism group of $Pet$. Then the orbit $1^{\dmf{G}}$ contains all vertices of $Pet$, for Petersen graph is vertex-transitive. According  to the Orbit-Stabilizer Lemma,
$$|\dmf{G}|=|1^{\dmf{G}}||\dmf{G}_1|=10\cdot|\dmf{G}_1|.$$
Consider now the orbit of vertex 6 acted by $\dmf{G}_1$. Since the vertex 6 is adjacent to the vertex 1 and $\dmf{G}_1$ fixes the vertex 1, one can readily see that $6^{\dmf{G}_1}=\{2,5,6\}$ and thus $$|\dmf{G}_1|=|6^{\dmf{G}_1}||\dmf{G}_{1,6}|=3\cdot|\dmf{G}_{1,6}|.$$
Again, we choose another vertex 2 adjacent to the vertex 1 and consider the orbit of 2 acted by $\dmf{G}_{1,6}$. According to a similar reason, one can readily see that $|2^{\dmf{G}_{1,6}}|=\{2,5\}$ and thus $$|\dmf{G}_{1,6}|=|2^{\dmf{G}_{1,6}}||\dmf{G}_{1,2,6}|=2\cdot|\dmf{G}_{1,2,6}|.$$
It is easy to see there is only one non-trivial element in $\dmf{G}_{1,2,6}$, namely, $(3,7)(4,10)(8,9)$. Consequently, $|\dmf{G}_{1,2,6}|=2$. Therefore, $|\dmf{G}|=120$ and thus $\dmf{G}\cong\dmf{S}_5$.

It is easy to check by means of the right part of Fig.\ref{Fig-1} that two permutations $(1,2,3,4,5)$ and $(1,2)$ in $\dmf{S}_5$ induce two automorphisms of $Pet$, namely, $(1,4,2,5,3)(6,9,7,10,8)$ and $(3,7)(4,10)(8,9)$. Since $\dmf{S}_5$ is generated by the two elements $(1,2,3,4,5)$ and $(1,2)$ and $\dmf{G}\cong\dmf{S}_5$, $\dmf{G}$ can be generated by the two automorphisms $\sigma_1=(1,4,2,5,3)(6,9,7,10,8)$ and $\sigma_2=(3,7)(4,10)(8,9)$. As a result, to determine the irreducible representations of the permutation representation $\rho$ of the automorphism group $\dmf{G}$, which maps each permutation $\sigma$ in $\dmf{G}$ to its corresponding linear operator $\dcal{T}_{\sigma}$ on $\C^{10}$, it suffices to figure out those minimum common invariant subspaces of $\dcal{T}_{\sigma_1}$ and $\dcal{T}_{\sigma_2}$. For convenience, we use $\sigma_i$ $(i=1,2)$ to represent the linear operator $\dcal{T}_{\sigma_i}$ in what follows. \vspace{2mm}

It is well-known that the Petersen graph is a strong regular graph and its adjacency matrix $\dbf{A}$ has 3 distinct eigenvalues 3, 1 and -2 of multiplicity 1, 5 and 4, respectively. Let $V_3$, $V_1$ and $V_{-2}$ denote the three eigenspaces of $\dbf{A}$ corresponding to the three eigenvalues respectively. We now figure out those those extremal eigenvectors in every eigenspace.

The eigenspace $V_3$ is spanned by the vector $\bf\dmb{1}$, every entry of which is equal to the number 1, and thus $V_3$ is an irreducible representation of $\rho$. Consequently, every vector in $V_3$ is both symmetric and asymmetric.

Since $V_1$ and $V_{-2}$ are both $\sigma_1$-invariant, each of them has a basis consisting of eigenvectors of $\sigma_1$ in virtue of Lemma \ref{Lem-InvariantSubspace}. To obtain those eigenvectors of $\sigma_1$, we first build the matrix $\dbf{X}_1$ of $\sigma_1$ with respect to the basis $\dmb{b}_1=\dmb{1},\dmb{b}_2,\ldots,\dmb{b}_{10}$ consisting of eigenvectors of $\dbf{A}$, where the vectors $\dmb{b}_2,\ldots,\dmb{b}_6$ span the eigenspace $V_1$ and the vectors  $\dmb{b}_7,\ldots,\dmb{b}_{10}$ span the eigenspace $V_{-2}$.

As we have seen in the first section, $\dbf{P}_1$ is the matrix of $\sigma_1$ with respect to the standard basis $\dmb{e}_1,\ldots,\dmb{e}_{10}$, so $$\dbf{X}_1=\dbf{B}^{-1}\dbf{P}_1\dbf{B},$$
where $\dbf{B}=(\dmb{1},\dmb{b}_2,\ldots,\dmb{b}_{10})$ is the transition matrix from the standard basis to the basis $\dmb{1},\dmb{b}_2,\ldots,\dmb{b}_{10}$. Then the matrix $\dbf{X}_1$ consists of three blocks so that each of them corresponds to a eigenspace of $\dbf{A}$. We now compute the eigenvectors $\dmb{x}_{1},\ldots,\dmb{x}_{10}$ of $\dbf{X}_1$. Obviously, the expression $\dmb{x}_{i}$ $(i=1,\ldots,10)$ is relative to the basis $\dmb{1}_1,\ldots,\dmb{b}_{10}$, and the expression $\dmb{u}_{i}=\dbf{B}\dmb{x}_{i}$ $(i=1,\ldots,10)$ is relative to the standard basis. Consequently, $V_1=\drm{span}(\dmb{u}_{2},\ldots,\dmb{u}_{6})$ and $V_{-2}=\drm{span}(\dmb{u}_{7},\ldots,\dmb{u}_{10})$.

On the other hand, it is easy to see that $\sigma_1$ has five distinct eigenvalues $1, e^{\frac{2\pi}{5}\ii}, e^{\frac{4\pi}{5}\ii},$ $e^{\frac{6\pi}{5}\ii}, e^{\frac{8\pi}{5}\ii}$, and each of them has multiplicity 2. The further computation shows that each eigenspace of $\sigma_1$ in $V_1$ (or $V_{-2}$) is of dimension one. We do all those numerical computations by means of \textbf{\textit{Mathematica}}.\vspace{2mm}

Five eigenvectors of $\sigma_1$ in $V_1$, corresponding to eigenvalues $1, e^{\frac{2\pi}{5}\ii}, e^{\frac{4\pi}{5}\ii},$ $e^{\frac{6\pi}{5}\ii}, e^{\frac{8\pi}{5}\ii}$, respectively, are listed below.
$$
(\dmb{u}_{2},\dmb{u}_{3},\dmb{u}_{4}) =
\begin{pmatrix}
   -1 & 2.61803                 & 0.381966 + 1.11022\times 10^{-16}\ii  \\
   -1 & 0.809017 - 2.4899\ii    & -0.309017 - 0.224514\ii  \\
   -1 & -2.11803 - 1.53884\ii   & 0.118034 + 0.363271\ii  \\
   -1 & -2.11803 + 1.53884\ii   & 0.118034 - 0.363271\ii \\
   -1 & 0.809017 + 2.4899\ii    & -0.309017 + 0.224514\ii \\
    1 & 1                       & 1 \\
    1 & 0.309017 - 0.951057 \ii & -0.809017 - 0.587785\ii   \\
    1 & -0.809017 - 0.587785\ii & 0.309017 + 0.951057\ii \\
    1 & -0.809017 + 0.587785\ii & 0.309017 - 0.951057\ii  \\
    1 & 0.309017 + 0.951057\ii  & -0.809017 + 0.587785\ii  \\
  \end{pmatrix},
$$
and $\dmb{u}_{5}=\overline{\dmb{u}_{4}}$ and $\dmb{u}_{6}=\overline{\dmb{u}_{3}}$.\vspace{2mm}

Four eigenvectors of $\sigma_1$ in $V_{-2}$, corresponding to eigenvalues $ e^{\frac{2\pi}{5}\ii}, e^{\frac{4\pi}{5}\ii},$ $e^{\frac{6\pi}{5}\ii}, e^{\frac{8\pi}{5}\ii}$, respectively, are listed below.
$$(\dmb{u}_{7},\dmb{u}_{8}) =
\begin{pmatrix}
    -0.118034 - 0.363271\ii & 2.11803 - 1.53884\ii        \\
    -0.381966 & -2.61803 + 1.11022\times 10^{-16}\ii      \\
    -0.118034 + 0.363271\ii & 2.11803 + 1.53884\ii        \\
    0.309017 + 0.224514\ii  & -0.809017 - 2.4899\ii       \\
    0.309017 - 0.224514\ii  & -0.809017 + 2.4899\ii       \\
    0.309017 + 0.951057\ii  & -0.809017 + 0.587785\ii     \\
    1                           & 1                       \\
    0.309017 - 0.951057\ii  & -0.809017 - 0.587785\ii     \\
    -0.809017 - 0.587785\ii & 0.309017 + 0.951057\ii      \\
    -0.809017 + 0.587785\ii & 0.309017 - 0.951057\ii      \\
  \end{pmatrix},
$$
and $\dmb{u}_{9}=\overline{\dmb{u}_{8}}$ and $\dmb{u}_{10}=\overline{\dmb{u}_{7}}$.\vspace{2mm}

Now we show that the eigenspace $V_1$ is an irreducible representation of $\rho$. As we pointed out before, it is sufficient to show that $V_1$ is the minimum common invariant subspaces of $\sigma_1$ and $\sigma_2$.  Suppose $U$ is a $\dmf{G}$-invariant subspace in $V_1$ spanned by $\dmb{u}_{i_1},\ldots,\dmb{u}_{i_s}$, where $i_j\in\{2,\ldots,6\}$. Then  according to the definition of $\dmf{G}$-invariant subspace, two vectors $\sigma_2\dmb{u}_{i_j}$ and $\sigma_1^{l}\circ\sigma_2\dmb{u}_{i_j}$ $(j=1,\ldots,s)$ must be in $U$ where $l$ is a positive integer not more than 5. On the other hand, it is not difficult, by means of \textbf{\textit{Mathematica}}, to check that five vectors $\sigma_2\dmb{u}_{i}$, $\sigma_1^{1}\circ\sigma_2\dmb{u}_{i},\ldots, \sigma_1^{4}\circ\sigma_2\dmb{u}_{i}$ $(i=2,\ldots,6)$ are linear independent. Thus the dimension of $U$ is 5 due to Lemma \ref{Lem-NonInvariantSubspace}, and therefore, $V_1$ is an irreducible representation of $\rho$.

One can prove the eigenspace $V_{-2}$ is also an irreducible representation of $\rho$ in a similar way. As a result, every vector in $V_1$ (or in $V_{-2}$) is both symmetric and asymmetric.

The symmetric and asymmetric vectors of an eigenspace of $\dbf{A}(G)$ so far are defined for the automorphism group $\drm{Aut}~G$, but evidently those notions can be defined for any subgroup of $\drm{Aut}~G$. The right part of Fig.\ref{Fig-2} illustrates a symmetric eigenvector of $V_1$ for the cyclic group $\langle\sigma_1\rangle$, and the right part of Fig.\ref{Fig-3} illustrates a symmetric eigenvector of $V_{-2}$ for the cyclic group $\langle\sigma_2\rangle$.

\begin{figure}[!htbp]
\begin{center}\setlength{\unitlength}{0.5bp}
 \includegraphics[width=13.3cm]{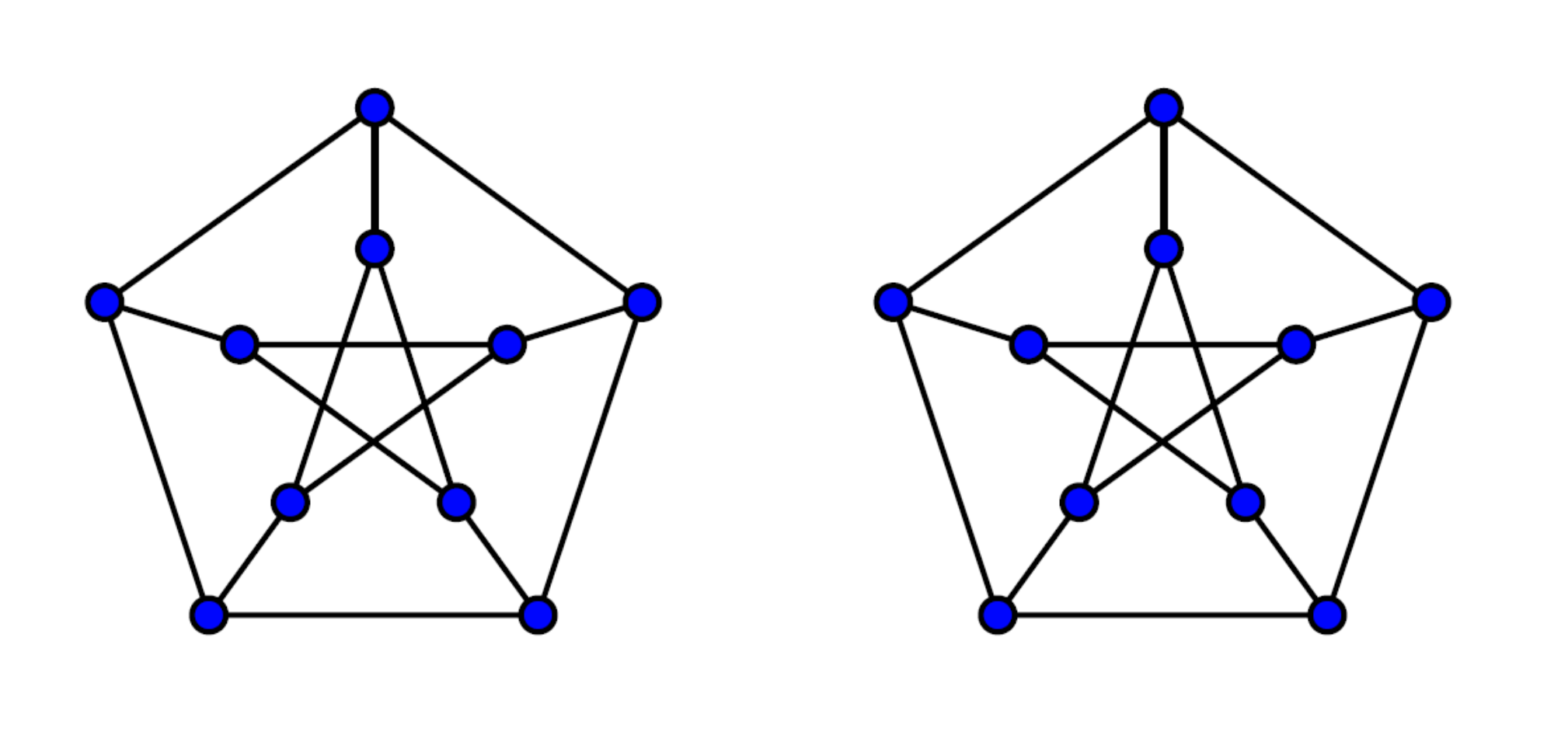}
 \put(-577,318){\fontsize{11}{2.73}\selectfont $1$}
 \put(-725,217){\fontsize{11}{2.73}\selectfont $2$}
 \put(-660,34){\fontsize{11}{2.73}\selectfont $3$}
 \put(-500,34){\fontsize{11}{2.73}\selectfont $4$}
 \put(-435,217){\fontsize{11}{2.73}\selectfont $5$}
 \put(-590,245){\fontsize{11}{2.73}\selectfont $6$}
 \put(-657,165){\fontsize{11}{2.73}\selectfont $7$}
 \put(-617,85){\fontsize{11}{2.73}\selectfont $8$}
 \put(-540,85){\fontsize{11}{2.73}\selectfont $9$}
 \put(-512,165){\fontsize{11}{2.73}\selectfont $10$}
\put(-220,318){\fontsize{11}{2.73}\selectfont $\{1,2\}$}
 \put(-148,160){\fontsize{11}{2.73}\selectfont $\{1,3\}$}
 \put(-192,85){\fontsize{11}{2.73}\selectfont $\{1,4\}$}
 \put(-302,34){\fontsize{11}{2.73}\selectfont $\{1,5\}$}
  \put(-142,34){\fontsize{11}{2.73}\selectfont $\{2,3\}$}
   \put(-253,85){\fontsize{11}{2.73}\selectfont $\{2,4\}$}
   \put(-286,160){\fontsize{11}{2.73}\selectfont $\{2,5\}$}
   \put(-372,227){\fontsize{11}{2.73}\selectfont $\{3,4\}$}
   \put(-250,248){\fontsize{11}{2.73}\selectfont $\{3,5\}$}
   \put(-62,227){\fontsize{11}{2.73}\selectfont $\{4,5\}$}
\end{center}
\caption{ Petersen graph $Pet$}\label{Fig-1}
\end{figure}

\begin{figure}[!htbp]
\begin{center}\setlength{\unitlength}{0.5bp}
 \includegraphics[width=13.3cm]{Petersen.pdf}
 \put(-577,318){\fontsize{11}{2.73}\selectfont $1$}
 \put(-727,225){\fontsize{11}{2.73}\selectfont $-1$}
 \put(-660,34){\fontsize{11}{2.73}\selectfont $-2$}
 \put(-500,34){\fontsize{11}{2.73}\selectfont $-1$}
 \put(-435,217){\fontsize{11}{2.73}\selectfont $3$}
 \put(-610,245){\fontsize{11}{2.73}\selectfont $-1$}
 \put(-645,160){\fontsize{11}{2.73}\selectfont $0$}
 \put(-617,85){\fontsize{11}{2.73}\selectfont $0$}
 \put(-560,85){\fontsize{11}{2.73}\selectfont $-2$}
 \put(-516,160){\fontsize{11}{2.73}\selectfont $3$}
\put(-202,318){\fontsize{11}{2.73}\selectfont $-1$}
\put(-335,227){\fontsize{11}{2.73}\selectfont $-1$}
\put(-282,33){\fontsize{11}{2.73}\selectfont $-1$}
\put(-123,34){\fontsize{11}{2.73}\selectfont $-1$}
\put(-56,222){\fontsize{11}{2.73}\selectfont $-1$}
\put(-210,246){\fontsize{11}{2.73}\selectfont $1$}
\put(-266,160){\fontsize{11}{2.73}\selectfont $1$}
\put(-240,85){\fontsize{11}{2.73}\selectfont $1$}
\put(-162,85){\fontsize{11}{2.73}\selectfont $1$}
 \put(-136,160){\fontsize{11}{2.73}\selectfont $1$}
 \put(-350,-10){\fontsize{11}{2.73}\selectfont A symmetric eigenvector of $V_1$ for $\langle\sigma_1\rangle$}
\end{center}
\caption{Asymmetric eigenvectors of $V_1$ for $\drm{Aut}~Pet$}\label{Fig-2}
\end{figure}

\begin{figure}[!htbp]
\begin{center}\setlength{\unitlength}{0.5bp}
 \includegraphics[width=13.3cm]{Petersen.pdf}
 \put(-577,318){\fontsize{11}{2.73}\selectfont $4$}
 \put(-725,225){\fontsize{11}{2.73}\selectfont $-6$}
 \put(-660,34){\fontsize{11}{2.73}\selectfont $4$}
 \put(-500,34){\fontsize{11}{2.73}\selectfont $-1$}
 \put(-435,217){\fontsize{11}{2.73}\selectfont $-1$}
 \put(-610,245){\fontsize{11}{2.73}\selectfont $-1$}
 \put(-657,165){\fontsize{11}{2.73}\selectfont $4$}
 \put(-627,85){\fontsize{11}{2.73}\selectfont $-1$}
 \put(-555,85){\fontsize{11}{2.73}\selectfont $-1$}
 \put(-512,165){\fontsize{11}{2.73}\selectfont $-1$}
\put(-202,318){\fontsize{11}{2.73}\selectfont $0$}
\put(-335,227){\fontsize{11}{2.73}\selectfont $0$}
\put(-282,33){\fontsize{11}{2.73}\selectfont $-1$}
\put(-123,34){\fontsize{11}{2.73}\selectfont $1$}
\put(-56,222){\fontsize{11}{2.73}\selectfont $0$}
\put(-210,246){\fontsize{11}{2.73}\selectfont $0$}
\put(-272,160){\fontsize{11}{2.73}\selectfont $1$}
\put(-240,85){\fontsize{11}{2.73}\selectfont $1$}
\put(-175,85){\fontsize{11}{2.73}\selectfont $-1$}
 \put(-136,160){\fontsize{11}{2.73}\selectfont $-1$}
 \put(-350,-10){\fontsize{11}{2.73}\selectfont A symmetric eigenvector of $V_1$ for $\langle\sigma_2\rangle$}
\end{center}
\caption{Asymmetric eigenvectors of $V_{-2}$ for $\drm{Aut}~Pet$}\label{Fig-3}
\end{figure}


\begin{thebibliography}{99}

\bibitem{Axler} Sheldon Axler, {\it Linear algebra - done right}, Springer-Verlag New York, 1997.

\bibitem{Baibai} L\'{a}szl\'{o} Baibai, Spectra of Cayley Graphs, {\it Journal of Combinatorial Theory, Series} {\bf B 27} (1979), 180-189.

\bibitem{Lovasz} L\'{a}szl\'{o} Lov\'{a}sz, Spectra of graphs with transitive groups, {\it Periodica Mathematica Hungarica} {\bf 6} (1975), 191-195.

\bibitem{LP} Klaus Lux and Herbert Pahlings, {\it Representations of groups - A computational approach}, Cambridge studies in advanced mathematics 124, Cambridge University Press, 2010.

\bibitem{Serre} Jean-Pierre Serre, {\it Linear representations of finite groups}, Springer-Verlag New York, 1977.


\end{thebibliography}
\end{document}